\newtheorem{proposition}{Proposition}[section]
\newtheorem{lemma}[proposition]{Lemma}
\newtheorem{corollary}[proposition]{Corollary}
\newtheorem{theorem}[proposition]{Theorem}
\theoremstyle{definition}
\newtheorem{remark}[proposition]{Remark}
\newcommand{\thlabel}[1]{\label{th:#1}}
\newcommand{\thref}[1]{Theorem~\ref{th:#1}}
\newcommand{\selabel}[1]{\label{se:#1}}
\newcommand{\seref}[1]{Section~\ref{se:#1}}
\newcommand{\lelabel}[1]{\label{le:#1}}
\newcommand{\leref}[1]{Lemma~\ref{le:#1}}
\newcommand{\colabel}[1]{\label{co:#1}}
\newcommand{\coref}[1]{Corollary~\ref{co:#1}}
\newcommand{\eqlabel}[1]{\label{eq:#1}}
\newcommand{\equref}[1]{(\ref{eq:#1})}
\def\lan{\langle}
\def\ran{\rangle}
\def\ot{\otimes}
\def\NN{{\mathbb N}}
\def\ZZ{{\mathbb Z}}
\newcommand{\Cc}{\mathcal{C}}
\def\*C{{}^*\hspace*{-1pt}{\Cc}}
\def\text#1{{\rm {\rm #1}}}
\begin{document}

\title[Classifying bicrossed products of two Taft algebras]
{Classifying bicrossed products of two Taft algebras}

\author{A. L. Agore}
\address{Faculty of Engineering, Vrije Universiteit Brussel, Pleinlaan 2, B-1050 Brussels, Belgium \textbf{and} ''Simion Stoilow'' Institute of Mathematics of the Romanian Academy, P.O. Box 1-764, 014700 Bucharest, Romania}
\email{ana.agore@vub.ac.be and ana.agore@gmail.com}

\subjclass[2010]{16T10, 16T05, 16S40}

\thanks{The author is postdoctoral fellow of the Fund for Scientific Research-Flanders (Belgium) (FWO-Vlaanderen).}

\subjclass[2010]{16T05, 16S40} \keywords{Bicrossed product, The
factorization problem, Classification of Hopf algebras, Taft
algebra, Automorphism group, Drinfel'd double.}


\begin{abstract}
We classify all Hopf algebras which factor through two Taft
algebras $\mathbb{T}_{n^{2}}(\overline{q})$ and respectively
$T_{m^{2}}(q)$. To start with, all possible matched pairs between
the two Taft algebras are described: if $\overline{q} \neq
q^{n-1}$ then the matched pairs are in bijection with the group of
$d$-th roots of unity in $k$, where $d = (m,\,n)$ while if
$\overline{q} = q^{n-1}$ then besides the matched pairs above we
obtain an additional family of matched pairs indexed by $k^{*}$.
The corresponding bicrossed products (double cross product in Majid's terminology) are explicitly described by
generators and relations and classified. As a consequence of our
approach, we are able to compute the number of isomorphism types
of these bicrossed products as well as to describe their
automorphism groups.
\end{abstract}

\maketitle

\section*{Introduction}

The factorization problem originates in group theory and it was
first considered by Maillet (\cite{Mai}). Since then, it was also
introduced and intensively studied in the context of other
mathematical objects such as: (co)algebras (\cite{Tom, CIMZ}), Lie
algebras (\cite{Mic}), Lie groups (\cite{majid3}), groupoids
(\cite{andr}), Hopf algebras (\cite{majid2}), fusion categories (\cite{Gk}) and so on.  For a
detailed historical update on the problem we refer to \cite{abm,
am2015} and the references therein. The factorization problem in
its original group setting asks for the description and
classification of all groups $X$ which factor through two given
groups $G$ and $H$, i.e $X = GH$ and $G \cap H = \{1\}$. However,
although the statement of the problem is very simple and natural,
no major progress has been made so far as we still lack exhaustive
methods to tackle it. For instance, even the description and
classification of groups which factor through two finite cyclic
groups is still an open problem although there are several papers
dealing with it, such as the four papers by J. Douglas
\cite{Douglas} and the more recent one \cite{acim} which solves
completely the problem in the special case where one of the groups
is of prime order.

One turning point in studying the factorization problem for groups
was the bicrossed product construction introduced in a paper by
Zappa (\cite{zappa}); later on, the same construction appears in a
paper of Takeuchi \cite{Takeuchi} where the terminology bicrossed
product originates. The main ingredients in constructing bicrossed
products are the so-called matched pairs of groups. The
corresponding notions in the context of Hopf algebras (see
\seref{1.2} for the precise definitions) were introduced by Majid
(\cite{majid}) and allowed for a more computational approach of
the problem. This point of view was also considered recently in
\cite{abm} where a strategy for classifying bicrossed products of
Hopf algebras was proposed. The line of inquiry presented in
\cite{abm} was followed in \cite{Gabi} where the Hopf algebras
which factor through two Sweedler's Hopf algebras are described
and classified as well as in \cite{marc1} where the automorphism
group of the Drinfel'd double of a purely non-abelian finite group
is completely described. Similar ideas are also employed in
\cite{marc2} in order  to determine all quasitriangular structures
and ribbon elements on the Drinfel'd double of a finite group over
an arbitrary field.

In this paper, using the method presented in \cite{abm}, we
investigate Hopf algebras which factor through two Taft
algebras $T_{m^{2}}(q)$ and respectively
$\mathbb{T}_{n^{2}}(\overline{q})$. More precisely, we will
describe and classify all bicrossed products $T_{m^{2}}(q) \bowtie
\mathbb{T}_{n^{2}}(\overline{q})$. The number of isomorphism types
of these bicrossed products is also computed and, as expected, it
depends heavily on the arithmetics of the base field $k$. In
particular, for $m = n$ and $\overline{q} = q^{n-1}$, we find
the celebrated Drinfel'd double $D\bigl(T_{n^{2}}(q) \bigl)$ which
however is just one of the bicrossed products between two Taft
algebras having the same dimension. As a consequence of our
strategy the automorphism groups of all bicrossed products
$T_{m^{2}}(q) \bowtie \mathbb{T}_{n^{2}}(\overline{q})$ are
explicitly described. We mention as well that the problem of
describing the automorphism group of a given Hopf algebra is a
notoriously difficult problem coming from invariant theory.

The paper is organized as follows. In \nameref{prel} we review the
construction of the bicrossed product associated to a matched pair
of Hopf algebras $(A, H, \triangleleft, \triangleright)$. Then, we
state Majid's result (\thref{carMaj}) which turns the
factorization problem for Hopf algebras into a computational one:
given $A$ and $H$ two Hopf algebras, describe the set of all
matched pairs $(A, H, \triangleright, \triangleleft)$ and classify
up to an isomorphism all bicrossed products $A \bowtie \, H$.
\seref{main} gathers our main results. We start by describing all
matched pairs $(\mathbb{T}_{n^{2}}(\overline{q}), \,
T_{m^{2}}(q),\, \triangleright, \, \triangleleft)$ in
\thref{main}. It turns out that if $\overline{q} \neq q^{n-1}$
then the matched pairs are in bijection with the group of $d$-th
roots of unity in $k$, where $d = (m,\,n)$; if $\overline{q} =
q^{n-1}$, then besides the matched pairs above we obtain an
additional family of matched pairs indexed by $k^{*}$. The
bicrossed products corresponding to the two families of matched
pairs from \thref{main} which we denote by $\mathrm{T}_{n,\,m}^{\,
\sigma}(\overline{q},\,q)$ and respectively
$\mathrm{Q}_{n}^{\alpha}(q)$, for some $\sigma \in U_{d}(k)$ and
$\alpha \in k^{*}$, are described by generators and relations in
\coref{descriere}. Our main classification result is
\thref{clasificare} which gives necessary and sufficient
conditions for any two bicrossed products between
$\mathbb{T}_{n^{2}}(\overline{q})$ and $T_{m^{2}}(q)$ to be
isomorphic. Several cases need to be considered depending on
whether $n \neq m$ or $m=n$ and respectively $\overline{q} \neq q$
or $\overline{q} = q$. In particular, it is proved that the
Drinfel'd double $D\bigl(T_{n^{2}}(q)\bigl)$ is isomorphic to
$\mathrm{Q}_{n}^{1}(q)$.

\thref{clasificare} has two very important consequences. The first
one is \coref{nr} which records the number of isomorphism types of
the Hopf algebras described in \coref{descriere} while the second
one, \thref{auto}, describes explicitly their automorphism groups.
For instance the automorphism group of the Drinfel'd double
$D\bigl(T_{n^{2}}(q) \bigl)$ is proved to be isomorphic to $k^{*}$
for $n \geqslant 3$ and to a semidirect product $k^{*} \rtimes
\ZZ_{2}$ for $n=2$ (see also \cite{Gabi}).

\section{Preliminaries}\label{prel}
Throughout this paper, $k$ will be an arbitrary field of
characteristic zero. Unless otherwise specified, all algebras,
coalgebras, bialgebras, Hopf algebras, tensor products and
homomorphisms are over $k$. For a coalgebra $C$, we use Sweedler's
$\Sigma$-notation: $\Delta(c) = c_{(1)}\ot c_{(2)}$,
$(I\ot\Delta)\Delta(c) = c_{(1)}\ot c_{(2)}\ot c_{(3)}$, etc
(summation understood). If $C$ is a coalgebra, the opposite
coalgebra, $C^{cop}$, has the same underlaying vector space $C$
but with the comultiplication given by $\Delta^{cop} = \tau \circ
\Delta$, where $\tau$ is the flip map $\tau (a \otimes b) = b
\otimes a$. Given any vector space $V$ we denote by $V^{\ast}$ its
dual space; if $v \in V$, then the corresponding element of
$V^{\ast}$ will be denoted simply by $v^{\ast}$. The notation
$\delta_{i,\,j}$ stands for the Kronecker delta.

For an integer $n$ we denote by $U_n (k) = \{ \omega \in k \, | \,
\omega^n = 1 \}$ the cyclic group of $n$-th roots of unity in $k$.
If $| U_n (k) | = n$, then any generator of $U_n (k)$ is called a
primitive $n$-th root of unity; we will denote by $P_{n}(k)$ the
set of all primitive $n$-th roots of unity. It is straightforward
to see that if $m \neq n$ then $P_{m}(k) \cap P_{n}(k) =
\varnothing$. For any two positive integers $m$ and $n$, we will
denote their greatest common divisor by $(m,\,n)$.

Given a fixed positive integer $m \in \NN^{*}$ and $q$  a primitive
$m$-th root of unity in $k$ we will denote by $T_{m^{2}}(q)$ the Taft
Hopf algebra of order $m^{2}$. Recall that $T_{m^{2}}(q)$ is
generated as an algebra by two elements $h$ and $x$ subject to the
relations $h^{m} = 1$, $x^{m} = 0$ and $xh = qhx$. The coalgebra
structure is as follows: $h$ is a group like element while $x$ is
$(h,1)$-primitive. That is, the comultiplication, counit and
respectively antipode are given by:
\begin{eqnarray*}
\Delta(h) = h \ot h, \,\, \Delta(x) = x \ot h + 1 \ot x, \,\, \epsilon(h) = 1, \\
\epsilon(x) = 0,\,\ S(h) = h^{-1},\,\, S(x) = -xh^{-1}.
\end{eqnarray*}
Sweedler's Hopf algebra is obtained by considering $m=2$ and $q =
-1$. It is well known that $\{h^{i}x^{j}\}_{0 \leq i,\,j \leq
m-1}$ is a $k$-linear basis of the Taft algebra, the set of
group-like elements is $\mathcal{G}\bigl(T_{m^{2}}(q)\bigl) = \{1,
h, h^{2}, \cdots, h^{m-1}\}$ and the primitive elements
$\mathcal{P}_{h^{j},\, 1}\bigl(T_{m^{2}}(q)\bigl)$ are given as
follows for any $j = \{0,\,1, \cdots,\, m-1\}$ (see for instance
\cite{Sch}):
$$\mathcal{P}_{h^{j},\, 1} \bigl(T_{m^{2}}(q)\bigl) = \left \{\begin{array}{rcl}
\alpha(h^{j} - 1), \, & \mbox { if }& j \neq 1\\
\beta(h-1)+ \gamma x, \, & \mbox { if }& j=1
\end{array} \right.,\,\, {\rm for}\,\, {\rm some}\,\, \alpha, \, \beta,\, \gamma \in k.
$$
Also, it is worth pointing out that two Taft algebras $T_{m^{2}}(q)$ and respectively $T_{m^{2}}(q')$ are  isomorphic as Hopf
algebras  if and only if $q =
q'$. Furthermore, we also have a Hopf algebra
isomorphism $\psi : T_{m^{2}}(q) \to T_{m^{2}}(q)^{\ast}$ defined
as follows for any $i$, $j \in  \{0,\,1, \cdots,\, m-1\}$:

$$\psi (h) = h^{\ast}, \,\,\, \psi(x) = x^{*}$$
\begin{equation}\eqlabel{dual}
h^{\ast}(h^{i}x^{j}) = q^{i} \delta_{j,\,0},\,\,
x^{\ast}(h^{i}x^{j}) = \delta_{j,\,1}.
\end{equation}
Another trivial remark which will be useful in the sequel is that
the antipode of the Taft algebra provides an isomorphism of Hopf
algebras between $T_{m^{2}}(q)^{cop}$ and $T_{m^{2}}(q^{m-1})$.

Since we will be working with two Taft algebras (sometimes having
the same dimension) in order to avoid any confusion the second
Taft algebra will be denoted by $\mathbb{T}_{n^{2}}(\overline{q})$
where $\overline{q} \in P_n (k)$, and by $H$ and $X$ its
generators. Therefore, $H^{n} = 1$, $X^{n} = 0$, $XH =
\overline{q} HX$, $H$ is a group-like element while $X$ is an
$(H,\, 1)$-primitive element.

\subsection*{Bicrossed product of Hopf algebras}\selabel{1.2}

We recall briefly the main characters of this paper, namely the
bicrossed products of Hopf algebras. We will adopt the terminology bicrossed product
from \cite[Theorem IX 2.3]{Kassel} as it is also employed for the
analogous construction performed at the level of groups
\cite{Takeuchi}; other term referring to this construction used in the
literature is double cross product \cite[Proposition 3.12]{majid}.
A \textit{matched pair} of Hopf algebras is a
system $(A, H, \triangleleft, \triangleright)$, where $A$ and $H$
are Hopf algebras, $\triangleleft : H \otimes A \rightarrow H$,
$\triangleright: H \otimes A \rightarrow A$ are coalgebra maps
such that $(A, \triangleright)$ is a left $H$-module coalgebra,
$(H, \triangleleft)$ is a right $A$-module coalgebra and the
following compatibilities hold for any $a$, $b\in A$, $g$, $h\in
H$:
\begin{eqnarray}
h \triangleright1_{A} &{=}& \varepsilon_{H}(h)1_{A}, \quad 1_{H}
\triangleleft a =
\varepsilon_{A}(a)1_{H} \eqlabel{mp1} \\
g \triangleright(ab) &{=}& (g_{(1)} \triangleright a_{(1)}) \bigl
( (g_{(2)}\triangleleft a_{(2)})\triangleright b \bigl)
\eqlabel{mp2} \\
(g h) \triangleleft a &{=}& \bigl( g \triangleleft (h_{(1)}
\triangleright a_{(1)}) \bigl) (h_{(2)} \triangleleft a_{(2)})
\eqlabel{mp3} \\
g_{(1)} \triangleleft a_{(1)} \otimes g_{(2)} \triangleright
a_{(2)} &{=}& g_{(2)} \triangleleft a_{(2)} \otimes g_{(1)}
\triangleright a_{(1)} \eqlabel{mp4}
\end{eqnarray}
Recall that $H$ is called a right $A$-module coalgebra if $H$ is a
coalgebra in the monoidal category ${\mathcal M}_A $ of right
$A$-modules, i.e. there exists $\triangleleft : H \otimes A
\rightarrow H$ a morphism of coalgebras such that $(H,
\triangleleft) $ is a right $A$-module. For further computations,
the fact that two $k$-linear maps $\triangleleft : H \otimes A
\rightarrow H$, $\triangleright: H \otimes A \rightarrow A$ are
coalgebra maps can be written explicitly as follows:
\begin{eqnarray}
\Delta_{H}(h \triangleleft a) &{=}& h_{(1)} \triangleleft a_{(1)}
\otimes h_{(2)} \triangleleft a_{(2)}, \,\,\,\,\,
\varepsilon_{A}(h \triangleleft a) =
\varepsilon_{H}(h)\varepsilon_{A}(a)
\eqlabel{6}\\
\Delta_{A}(h \triangleright a) &{=}& h_{(1)} \triangleright
a_{(1)} \otimes h_{(2)} \triangleright a_{(2)}, \,\,\,\,\,
\varepsilon_{A}(h \triangleright a) =
\varepsilon_{H}(h)\varepsilon_{A}(a) \eqlabel{8}
\end{eqnarray}
for all $h \in H$, $a \in A$. The actions $\triangleleft : H
\otimes A \rightarrow H$, $\triangleright: H \otimes A \rightarrow
A$ are called \emph{trivial} if $h \triangleleft a = \varepsilon_A
(a) h$ and respectively $h \triangleright a = \varepsilon_H(h) a$,
for all $a\in A$ and $h\in H$.

If $(A, H, \triangleleft, \triangleright)$ is a matched pair of
Hopf algebras, the \textit{bicrossed product} $A \bowtie H$ of $A$
with $H$ is the $k$-module $A\ot H$ with the multiplication given
by
\begin{equation}\eqlabel{0010}
(a \bowtie h) \cdot (c \bowtie g):= a (h_{(1)}\triangleright
c_{(1)}) \bowtie (h_{(2)} \triangleleft c_{(2)}) g
\end{equation}
for all $a$, $c\in A$, $h$, $g\in H$, where we denoted $a\ot h$ by
$a\bowtie h$. $A \bowtie H$ is a Hopf algebra with the coalgebra
structure given by the tensor product of coalgebras and the
antipode:
\begin{equation}\eqlabel{antipbic}
S_{A \bowtie H} ( a \bowtie h ) = S_H (h_{(2)}) \triangleright S_A
(a_{(2)}) \, \bowtie \, S_H (h_{(1)}) \triangleleft S_A (a_{(1)})
\end{equation}
for all $a\in A$ and $h\in H$.

The generic example of a bicrossed product is the Drinfel'd double
$D(H)$. For any finite dimensional Hopf algebra $H$ we have a
matched pair of Hopf algebras $( (H^*)^{\rm cop}, H,
\triangleleft, \triangleright)$, where the actions $\triangleleft$
and $\triangleright$ are given as follows:
\begin{equation} \eqlabel{dubluact}
h \triangleleft h^* := \lan h^*, \, S_H^{-1} (h_{(3)}) h_{(1)}
\ran h_{(2)}, \quad h \triangleright h^* := \lan h^*, \, S_H^{-1}
(h_{(2)}) \, ? \, h_{(1)} \ran
\end{equation}
for all $h\in H$ and $h^* \in H^*$, where the question mark should be seen as a mute variable (\cite[Theorem
IX.3.5]{Kassel}). The Drinfel'd double of $H$ is the bicrossed
product associated to this matched pair, i.e. $D(H) = (H^*)^{\rm
cop} \bowtie H$. A generalization of this construction for
infinite dimensional Hopf algebras was introduced by Majid under
the name of \textit{generalized quantum double} (see
(\cite[Example 7.2.6]{majid})).

A Hopf algebra $E$ \emph{factors} through two Hopf algebras $A$
and $H$ if there exists injective Hopf algebra maps $i : A \to E $
and $j : H\to E$ such that the map
$$
A \ot H \to E, \quad a \ot h \mapsto i(a) j(h)
$$
is bijective. The next crucial result due to Majid \cite[Theorem
7.2.3]{majid2} characterizes Hopf algebras which factor through
two Hopf subalgebras in terms of matched pairs and the
corresponding bicrossed products.

\begin{theorem}\thlabel{carMaj}
Let $A$, $H$ be two Hopf algebras. A Hopf algebra $E$ factors
through $A$ and $H$ if and only if there exists a matched pair of
Hopf algebras $(A, H, \triangleleft, \triangleright)$ such that $E
\cong A \bowtie H$, an isomorphism of Hopf algebras.
\end{theorem}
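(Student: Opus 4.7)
The strategy is to prove the two directions separately, with the backward direction being mostly bookkeeping and the forward direction requiring the standard ``unique factorization'' argument for coalgebra maps into tensor products.

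For the backward direction ($\Leftarrow$), I would simply observe that if $(A, H, \triangleleft, \triangleright)$ is a matched pair, then the maps $i_A : A \to A \bowtie H$, $a \mapsto a \bowtie 1_H$ and $i_H : H \to A \bowtie H$, $h \mapsto 1_A \bowtie h$ are injective Hopf algebra homomorphisms (this is immediate from the definition of the multiplication \equref{0010}, the coalgebra structure as the tensor product, and the antipode formula \equref{antipbic}, together with the unit constraints \equref{mp1}). The map $A \ot H \to A \bowtie H$, $a \ot h \mapsto i_A(a) i_H(h) = (a \bowtie 1_H)(1_A \bowtie h) = a \bowtie h$ is the identity on the underlying vector space, hence bijective, so $A \bowtie H$ factors through $A$ and $H$.

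For the forward direction ($\Rightarrow$), suppose $E$ factors through $A$ and $H$ via $i : A \hookrightarrow E$, $j : H \hookrightarrow E$, and set $\varphi : A \ot H \to E$, $\varphi(a \ot h) = i(a)j(h)$. The core observation is that $\varphi = m_E \circ (i \ot j)$ is a coalgebra map (as a composition of coalgebra maps between bialgebras), and since it is bijective it is a coalgebra isomorphism. Hence $\rho := \varphi^{-1} \circ m_E \circ (j \ot i) : H \ot A \to A \ot H$ is a coalgebra map as well. Define
$$ h \triangleright a := (\Id_A \ot \varepsilon_H)\,\rho(h \ot a), \qquad h \triangleleft a := (\varepsilon_A \ot \Id_H)\,\rho(h \ot a). $$
A routine lemma (applying $\varepsilon_H$ to the second tensorand of $\Delta_{A \ot H}(\rho(h \ot a))$ expanded via the coalgebra-map property of $\rho$) shows that in fact
$$ \rho(h \ot a) = (h_{(1)} \triangleright a_{(1)}) \ot (h_{(2)} \triangleleft a_{(2)}), $$
which translates, after applying $\varphi$, into the key identity
$$ j(h)\,i(a) = i(h_{(1)} \triangleright a_{(1)})\, j(h_{(2)} \triangleleft a_{(2)}) \quad \text{in } E. $$

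From here, the matched pair axioms \equref{mp1}--\equref{mp4} and the module-coalgebra conditions \equref{6}, \equref{8} are derived by straightforward manipulations in $E$. The unit axioms follow from $i(1_A) = j(1_H) = 1_E$; the module axioms and \equref{mp2}, \equref{mp3} follow from associativity of $m_E$ applied to $j(g)j(h)i(a) = j(gh)i(a)$ and $j(g)i(a)i(b) = j(g)i(ab)$ respectively, with the uniqueness of the decomposition (via $\varphi$ being bijective) forcing the equalities; the compatibility \equref{mp4} follows by comparing $\Delta_E(j(h)i(a))$ computed directly versus through the factorization above. Once these are in place, $\varphi$ intertwines the multiplication $(a \bowtie h)(c \bowtie g) = \varphi^{-1}(i(a)j(h)i(c)j(g))$ on the left with \equref{0010} on the right by construction, so $\varphi : A \bowtie H \to E$ is a bialgebra isomorphism; since a bialgebra map between Hopf algebras automatically preserves antipodes, it is a Hopf algebra isomorphism.

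The only delicate step is the passage from $\rho$ being an arbitrary coalgebra map into $A \ot H$ to the tensor decomposition of $\rho(h \ot a)$ in terms of $\triangleright$ and $\triangleleft$; this is the point where one really uses that $\varphi$ (and hence $\varphi^{-1}$) preserves comultiplication, not merely that $i$ and $j$ are Hopf maps. Once that lemma is isolated, the remainder of the argument is a sequence of direct verifications using associativity and coassociativity in $E$.
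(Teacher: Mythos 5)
The paper offers no proof of this theorem: it is quoted as Majid's result \cite[Theorem 7.2.3]{majid2}, so there is nothing internal to compare against. Your argument is correct and is essentially the standard proof from that reference: the backward direction via the canonical injections $a \mapsto a \bowtie 1_H$, $h \mapsto 1_A \bowtie h$ is fine, and in the forward direction you correctly identify the one non-routine step, namely that a coalgebra map $\rho : H \ot A \to A \ot H$ satisfies $\rho(h \ot a) = (h_{(1)} \triangleright a_{(1)}) \ot (h_{(2)} \triangleleft a_{(2)})$ with $\triangleright = (\Id_A \ot \varepsilon_H)\rho$ and $\triangleleft = (\varepsilon_A \ot \Id_H)\rho$, which follows by applying $\varepsilon$ to the middle legs of the coalgebra-map identity for $\rho$. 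The remaining verifications of \equref{mp1}--\equref{mp4} from associativity and coassociativity in $E$, using bijectivity of $\varphi$ to force equality of the decompositions, are exactly as you describe.
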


Our approach for classifying bicrossed products of Hopf algebras
relies on the strategy previously developed in \cite{abm}. For the
reader's convenience we state here the main classification result
from \cite{abm} which will be used throughout:

\begin{theorem}\thlabel{toatemorf}
Let $(A, H, \triangleright, \triangleleft)$ and $(A', H',
\triangleright', \triangleleft')$ be two matched pairs of Hopf
algebras. Then there exists a bijective correspondence between the
set of all morphisms of Hopf algebras $\psi : A \bowtie H \to A'
\bowtie ' H' $ and the set of all quadruples $(u, p, r, v)$, where
$u: A \to A'$, $p: A \to H'$, $r: H \rightarrow A'$, $v: H
\rightarrow H'$ are unitary coalgebra maps satisfying the
following compatibility conditions:
\begin{eqnarray}
u(a_{(1)}) \ot p(a_{(2)}) &{=}& u(a_{(2)}) \ot p(a_{(1)})\eqlabel{C1}\\
r(g_{(1)}) \ot v(g_{(2)}) &{=}& r(g_{(2)}) \ot v(g_{(1)})\eqlabel{C2}\\
u(ab) &{=}& u(a_{(1)}) \, \bigl( p (a_{(2)}) \triangleright' u(b) \bigl)\eqlabel{C3}\\
p(ab) &{=}& \bigl( p (a) \triangleleft' u(b_{(1)}) \bigl) \, p (b_{(2)})\eqlabel{C4}\\
r(tg) &{=}& r(t_{(1)}) \, \bigl(v(t_{(2)}) \triangleright'
r(g)\bigl)\eqlabel{C5}\\
v(tg) &{=}& \bigl(v(t) \triangleleft' r(g_{(1)})\bigl) \,
v(g_{(2)})\eqlabel{C6}\\
r(g_{(1)}) \bigl(v(g_{(2)}) \triangleright' u(b) \bigl) &{=}& u
(g_{(1)} \triangleright b_{(1)}) \, \Bigl( p (g_{(2)}
\triangleright b_{(2)}) \triangleright' r(g_{(3)} \triangleleft
b_{(3)})
\Bigl) \eqlabel{C7}\\
\bigl (v(g) \triangleleft' u(b_{(1)}) \bigl) \, p (b_{(2)}) &{=}&
\Bigl( p (g_{(1)} \triangleright b_{(1)}) \triangleleft '
r(g_{(2)} \triangleleft b_{(2)}) \Bigl) \, v (g_{(3)}
\triangleleft b_{(3)}) \eqlabel{C8}
\end{eqnarray}
for all $a$, $b \in A$, $g$, $t \in H$.

Under the above correspondence the morphism of Hopf algebras
$\psi: A \bowtie H \to A' \bowtie ' H' $ corresponding to $(u, p,
r, v)$ is given by:
\begin{equation}\eqlabel{morfbicros}
\psi(a \bowtie g) = u(a_{(1)}) \, \bigl( p(a_{(2)})
\triangleright' r(g_{(1)}) \bigl) \,\, \bowtie' \, \bigl(
p(a_{(3)}) \triangleleft' r(g_{(2)}) \bigl) \, v(g_{(3)})
\end{equation}
for all $a \in A$ and $h\in H$.
\end{theorem}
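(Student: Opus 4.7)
The plan is to construct the bijection explicitly in both directions, recovering the component maps from $\psi$ via counit projections, and then interpreting each of the eight compatibility conditions as a consequence of a single structural requirement (counitality, comultiplicativity, or one of three types of multiplicativity). Given a Hopf algebra morphism $\psi : A \bowtie H \to A' \bowtie ' H'$, I would first define
$$u(a) = (\Id \ot \varepsilon_{H'})\psi(a \bowtie 1), \qquad p(a) = (\varepsilon_{A'} \ot \Id)\psi(a \bowtie 1),$$
$$r(g) = (\Id \ot \varepsilon_{H'})\psi(1 \bowtie g), \qquad v(g) = (\varepsilon_{A'} \ot \Id)\psi(1 \bowtie g).$$
Since the embeddings $a \mapsto a \bowtie 1$ and $g \mapsto 1 \bowtie g$ are coalgebra maps and the comultiplication on a bicrossed product is the tensor-product one, each of $u$, $p$, $r$, $v$ is a unitary coalgebra map. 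Applying the appropriate partial counits to the identity $(\psi \ot \psi)\Delta(a \bowtie 1) = \Delta\psi(a \bowtie 1)$ yields the reconstruction $\psi(a \bowtie 1) = u(a_{(1)}) \bowtie' p(a_{(2)})$, and comparing this against the two symmetric reconstructions forces \equref{C1}; condition \equref{C2} is obtained identically from $\psi(1 \bowtie g)$.

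For the remaining compatibilities, the factorization $a \bowtie g = (a \bowtie 1)(1 \bowtie g)$ together with multiplicativity of $\psi$, expanded via the product rule \equref{0010} in $A' \bowtie' H'$ and the fact that $p$ and $r$ are coalgebra maps, produces the closed formula \equref{morfbicros}. Then applying $\psi$ to the three basic product types and projecting onto each tensor factor via the counits yields the remaining conditions: $(a \bowtie 1)(b \bowtie 1) = ab \bowtie 1$ gives \equref{C3} and \equref{C4}; $(1 \bowtie t)(1 \bowtie g) = 1 \bowtie tg$ gives \equref{C5} and \equref{C6}; and the crucial mixed product $(1 \bowtie g)(b \bowtie 1) = (g_{(1)} \triangleright b_{(1)}) \bowtie (g_{(2)} \triangleleft b_{(2)})$ gives \equref{C7} and \equref{C8}.

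Conversely, starting from a quadruple $(u, p, r, v)$ satisfying \equref{C1}--\equref{C8}, I would define $\psi$ by \equref{morfbicros} and verify it is a bialgebra morphism. Preservation of unit and counit is immediate from unitality and the counit identities for the four component maps. Preservation of comultiplication is a Sweedler-index manipulation invoking \equref{C1}--\equref{C2} together with the coalgebra-map property of $u$, $p$, $r$, $v$. Preservation of multiplication reduces, via bilinearity and the factorization $a \bowtie g = (a \bowtie 1)(1 \bowtie g)$, to the three basic products analyzed above, and on each of these the required identity is exactly \equref{C3}--\equref{C8}. Finally, since $A \bowtie H$ and $A' \bowtie' H'$ are Hopf algebras, any bialgebra morphism between them automatically commutes with the antipodes, so $\psi$ is a Hopf algebra morphism.

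I expect the main obstacle to be the mixed multiplicativity check in the converse direction, namely the identity $\psi((1 \bowtie g)(b \bowtie 1)) = \psi(1 \bowtie g)\psi(b \bowtie 1)$. Unwinding both sides produces a long expression with several nested applications of $\triangleright'$ and $\triangleleft'$ acting on products built from the four structural maps, and both \equref{C7} and \equref{C8} must be invoked simultaneously, after the coalgebra-compatibilities \equref{C1}--\equref{C2} have been used repeatedly to commute the Sweedler indices of $u$ past those of $p$, and of $r$ past those of $v$, into the order required by the module-coalgebra axioms of the matched pair $(A', H', \triangleright', \triangleleft')$.
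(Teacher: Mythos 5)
The paper does not actually prove this theorem: it is quoted verbatim from \cite{abm} ("we state here the main classification result from \cite{abm}"), so there is no in-paper argument to compare against. Your proposal is correct and is essentially the standard proof given in that reference: recover $u,p,r,v$ as the counit projections of $\psi$ restricted to $A\bowtie 1$ and $1\bowtie H$, derive \equref{C1}--\equref{C2} from the two symmetric reconstructions of a coalgebra map into a tensor-product coalgebra, obtain \equref{morfbicros} and \equref{C3}--\equref{C8} from multiplicativity on the three basic products, and reverse the construction; the only point worth making explicit beyond your outline is the (routine) check that the two assignments are mutually inverse, which uses $r(1)=1$, $v(1)=1$ and the unit axioms \equref{mp1} of the matched pair.
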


The following straightforward result will prove to be very effective in computing matched pairs of Hopf algebras.

\begin{lemma}\lelabel{primitive}
Let $(A, H, \triangleleft, \triangleright)$ be a matched pair of
Hopf algebras, $a$, $b \in G(A)$ and $g$, $h\in G(H)$. Then:

$(1)$ $g \triangleright a \in G(A)$ and $g\triangleleft a \in
G(H)$;

$(2)$ If $x \in P_{a, \, 1}(A)$, then $g \triangleleft x \in
P_{g\triangleleft a, \, g} (H)$ and $g
\triangleright x \in P_{g\triangleright a, \, 1}
(A)$;

$(3)$ If $y \in P_{g, \, 1}(H)$, then $y \triangleleft a \in
P_{g\triangleleft a, \, 1} (H)$ and $y
\triangleright a \in P_{g\triangleright a, \, a}
(A)$.
\end{lemma}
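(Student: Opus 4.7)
The plan is to prove each of the three parts by a direct Sweedler-notation computation that applies the coalgebra-map identities \equref{6} and \equref{8} to the input element, then simplifies the $\cdot \otimes 1$ and $1 \otimes \cdot$ summands using either the unit compatibilities in \equref{mp1} or the ambient module axioms $1_H \triangleright a = a$ and $g \triangleleft 1_A = g$. The deeper compatibilities \equref{mp2}--\equref{mp4} are not needed here, since only the module-coalgebra parts of the structure enter.

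For (1) I would feed the group-like elements $g$ and $a$ (both of which have diagonal coproduct) into \equref{8} to obtain
\[
\Delta_A(g \triangleright a) = g_{(1)} \triangleright a_{(1)} \otimes g_{(2)} \triangleright a_{(2)} = (g \triangleright a) \otimes (g \triangleright a),
\]
while $\varepsilon_A(g \triangleright a) = \varepsilon_H(g) \varepsilon_A(a) = 1$. The assertion $g \triangleleft a \in G(H)$ follows symmetrically from \equref{6}.

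For (2), starting from $\Delta(x) = x \otimes a + 1 \otimes x$, identity \equref{6} splits the coproduct of $g \triangleleft x$ into $(g \triangleleft x) \otimes (g \triangleleft a) + (g \triangleleft 1_A) \otimes (g \triangleleft x)$, and the right $A$-module axiom $g \triangleleft 1_A = g$ reduces this to the defining identity for $g \triangleleft x \in P_{g \triangleleft a, g}(H)$. The companion claim $g \triangleright x \in P_{g \triangleright a, 1}(A)$ is obtained in the same way from \equref{8}, using $g \triangleright 1_A = \varepsilon_H(g) 1_A = 1_A$ from \equref{mp1} in place of the module identity. Part (3) is formally symmetric: I apply \equref{6} and \equref{8} to $y \in P_{g,1}(H)$, and to clear the $1 \otimes \cdot$ terms I would invoke $1_H \triangleleft a = \varepsilon_A(a) 1_H = 1_H$ from \equref{mp1} (using that $a$ is group-like), together with the left $H$-module axiom $1_H \triangleright a = a$.

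There is no substantive obstacle here; the only care required is to match the correct unit axiom---either a module axiom or one of the compatibilities in \equref{mp1}---to each of the four boundary terms, and to remember that $\varepsilon_H(g) = \varepsilon_A(a) = 1$ for group-like $g$ and $a$.
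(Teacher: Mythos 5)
Your proof is correct: the paper itself offers no proof (it labels the lemma ``straightforward''), and your argument --- expanding $\Delta$ of the acted-upon element via the coalgebra-map identities \equref{6} and \equref{8}, then clearing the boundary terms with the unit module axioms $g \triangleleft 1_A = g$, $1_H \triangleright a = a$ and the compatibilities $g \triangleright 1_A = \varepsilon_H(g)1_A$, $1_H \triangleleft a = \varepsilon_A(a)1_H$ from \equref{mp1} --- is exactly the intended direct computation, and you correctly match each of the four boundary terms to the right axiom. The only point worth making explicit in part (1) is that $g \triangleright a \neq 0$, which follows from $\varepsilon_A(g \triangleright a) = 1$ as you computed.
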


\section{The bicrossed products of two Taft algebras}\selabel{main}

This section contains our main results. We start by describing all
possible matched pairs between two Taft algebras.

\begin{theorem}\thlabel{main}
Let $m$, $n \in \NN^{*} \setminus \{1\}$, $d = (m,\, n)$ and $q \in P_{m}(k)$, $\overline{q} \in P_{n}(k)$.

\begin{enumerate}

\item[1)] If $\overline{q} \neq q^{n-1}$ then there is a bijective correspondence between the matched pairs $(\mathbb{T}_{n^{2}}(\overline{q}), \, T_{m^{2}}(q),\, \triangleright, \, \triangleleft)$  and $U_{d}(k)$ such that the matched pair corresponding to the $d$-th root of unity $\sigma$ is given as follows:

\begin{center}
\begin{tabular} {l | c  c  }
$\triangleleft$ & $H$ & $X$ \\
\hline $h$ & $h$ & $0$ \\
$x$ & $\sigma x$ & $0$ \\
\end{tabular} \qquad
\begin{tabular} {l | c  c  }
$\triangleright$ & $H$ & $X$ \\
\hline $h$ & $H$ & $\sigma X$ \\
$x$ & $0$ & $0$ \\
\end{tabular}
\end{center}

\item[2)] If $\overline{q} = q^{n-1}$ then in addition to the matched pair depicted above we also have the following matched pair for any $\alpha \in k^{*}$:

\begin{center}
\begin{tabular} {l | c  c  }
$\triangleleft$ & $H$ & $X$ \\
\hline $h$ & $h$ & $0$ \\
$x$ & $q x$ & $\alpha (1-h)$ \\
\end{tabular} \qquad
\begin{tabular} {l | c  c  }
$\triangleright$ & $H$ & $X$ \\
\hline $h$ & $H$ & $q X$ \\
$x$ & $0$ & $\alpha(1-H)$ \\
\end{tabular}
\end{center}
\end{enumerate}
\end{theorem}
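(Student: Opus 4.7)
The approach is to exploit the fact that a matched pair is fully determined, via the module conditions together with \equref{mp2}--\equref{mp3}, by the values of $\triangleright$ and $\triangleleft$ on the four pairs of generators $\{h, x\}\times \{H, X\}$. So the plan is to first enumerate the possible values at these pairs using \leref{primitive} and the classification of skew-primitives in the Taft algebra, then impose the matched-pair compatibilities \equref{mp1}--\equref{mp4} to cut down to a finite-parameter family, and finally verify that each surviving candidate really defines a matched pair.

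I begin with the group-like part. By \leref{primitive}(1), $h\triangleright H\in G(A) = \langle H\rangle$ and $h\triangleleft H\in G(H) = \langle h\rangle$, so these restrict to a matched pair of the underlying cyclic groups. Iterating \equref{mp2} with $a=b=H$ and \equref{mp3} with $g = h$, together with $H^{n} = h^{m} = 1$, forces this group-level matched pair to be trivial: $h\triangleright H = H$ and $h\triangleleft H = h$.

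Next, \leref{primitive}(2)--(3) combined with the previous step gives $x\triangleright H \in P_{H, H}(A)$ and $h\triangleleft X\in P_{h, h}(H)$. A direct inspection shows $P_{1,1}(A) = P_{1,1}(H) = 0$ (the primitive element spaces of the Taft algebras are trivial, as follows from the skew-primitive description recalled in the preliminaries), hence $P_{H,H}(A) = H\cdot P_{1,1}(A) = 0$ and likewise $P_{h,h}(H) = 0$, so both $x\triangleright H$ and $h\triangleleft X$ vanish. On the other hand, $h\triangleright X \in P_{H,1}(A) = \{\beta(H-1)+\gamma X : \beta, \gamma \in k\}$ and $x\triangleleft H\in P_{h,1}(H) = \{\beta'(h-1)+\gamma' x : \beta', \gamma' \in k\}$. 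For the remaining primitive-on-primitive pair, expanding the coalgebra map identities \equref{6} and \equref{8} by means of $\Delta(x) = x\otimes h + 1\otimes x$ and $\Delta(X) = X\otimes H + 1\otimes X$ together with the vanishings just established yields $\Delta(x\triangleright X) = (x\triangleright X)\otimes H + 1\otimes (x\triangleright X)$, placing $x\triangleright X\in P_{H,1}(A)$; and symmetrically $x\triangleleft X\in P_{h,1}(H)$. Both thus admit analogous skew-primitive parametrizations.

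Plugging these parametrizations into the compatibilities \equref{mp2}--\equref{mp4}, evaluated on the Taft relations $XH = \bar q HX$, $H^{n} = 1$, $X^{n} = 0$ and their $h,x$-counterparts, yields a polynomial system. The generic outcome forces all $(H-1)$- and $(h-1)$-components to vanish and binds the two remaining scalars into a common $\sigma \in k^{*}$ satisfying $\sigma^{m} = \sigma^{n} = 1$, hence $\sigma \in U_{d}(k)$, producing the family in (1). The exceptional branch $x\triangleright X \neq 0$ becomes consistent only under the arithmetic coincidence $\bar q = q^{n-1}$; in that case $\sigma$ is forced to equal $q$ and one obtains $x\triangleright X = \alpha(1-H)$, $x\triangleleft X = \alpha(1-h)$ with $\alpha \in k^{*}$ free, producing the family in (2). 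The main obstacle is precisely this final step: the equations arising from \equref{mp4} involve products of powers of $q$ and $\bar q$ that must simultaneously collapse to $1$, and it is exactly this demand that singles out $\bar q = q^{n-1}$ (with $\sigma = q$) as the only way the exceptional family can arise. The converse --- that each of the tables does define a matched pair --- is then a finite, direct verification on generators, reduced by the coalgebra-map property and the algebra presentations of the two Taft algebras to a small set of checks.
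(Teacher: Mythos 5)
Your overall strategy --- reduce everything to the four generator pairs via \leref{primitive} and the classification of skew-primitives in the Taft algebra, then impose \equref{mp1}--\equref{mp4} --- is the same as the paper's. However, there is a genuine gap at the very first step. You claim that the restriction of the two actions to the group-likes is a matched pair of the cyclic groups $\langle H\rangle\cong \ZZ_{n}$ and $\langle h\rangle\cong \ZZ_{m}$, and that iterating \equref{mp2} and \equref{mp3} together with $H^{n}=h^{m}=1$ forces this group-level matched pair to be trivial. That is false: matched pairs of cyclic groups need not be trivial. For instance $S_{3}=\ZZ_{3}\ZZ_{2}$ is an exact factorization giving a matched pair of $\ZZ_{3}=\langle H\rangle$ and $\ZZ_{2}=\langle h\rangle$ with $h\triangleright H=H^{2}$ and $h\triangleleft H=h$, and all the group-level compatibilities (including the iterated ones you invoke, which only reproduce $h\triangleright H^{n}=1$) are satisfied. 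The conclusion $h\triangleright H=H$ is correct, but it cannot be reached at the group level; the paper's argument uses the skew-primitive $X$ in an essential way: if $h\triangleright H=H^{t}$ with $t\neq 1$, then by \leref{primitive} one has $h\triangleright X\in\mathcal{P}_{H^{t},\,1}\bigl(\mathbb{T}_{n^{2}}(\overline{q})\bigl)$, which for $t\neq 1$ is spanned by $1-H^{t}$ alone (no $X$-component), so iterating $m$ times yields $X=h^{m}\triangleright X\in{\rm span}\{1,H^{t_{m}}\}$, a contradiction. Since your subsequent deductions ($x\triangleright H\in\mathcal{P}_{H,\,H}=0$, $h\triangleright X\in\mathcal{P}_{H,\,1}$, and their mirror images for $\triangleleft$) all rest on $h\triangleright H=H$ and $h\triangleleft H=h$, this gap propagates through the whole argument.

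A second, lesser issue: the computational core (``plugging the parametrizations into \equref{mp2}--\equref{mp4} yields a polynomial system; the generic outcome forces\ldots'') is asserted rather than carried out, and the attribution of the dichotomy to \equref{mp4} is not quite right. In the paper \equref{mp4} only identifies the scalars across the two actions ($b=\sigma$ and $\alpha=\mu$); the conditions $\alpha(1-\overline{q}\sigma)=0$ and $\alpha(\sigma-q)=0$, which are what single out $\overline{q}=q^{-1}$ (hence $m=n$ and $\overline{q}=q^{n-1}$, via $P_{m}(k)\cap P_{n}(k)=\varnothing$ for $m\neq n$) and force $\sigma=q$ on the exceptional branch, come from the module-coalgebra conditions and from \equref{mp2}--\equref{mp3}. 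Those computations are precisely where the statement's case split is produced, so they cannot be waved through.
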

\begin{proof}
Let $(\mathbb{T}_{n^{2}}(\overline{q}), \, T_{m^{2}}(q),\, \triangleright, \, \triangleleft)$ be a matched pair of Hopf algebras, where $ \triangleright : T_{m^{2}}(q) \ot \mathbb{T}_{n^{2}}(\overline{q}) \to \mathbb{T}_{n^{2}}(\overline{q})$ and $\triangleleft : T_{m^{2}}(q) \ot \mathbb{T}_{n^{2}}(\overline{q}) \to T_{m^{2}}(q)$. We will start by proving that $h \triangleright H = H$. First, notice that by \leref{primitive} we have $h \triangleright H \in \mathcal{G} \bigl(\mathbb{T}_{n^{2}}(\overline{q})\bigl) = \{1, H, \cdots, H^{n-1}\}$. It is easy to see that $h \triangleright H \neq 1$; indeed, if $h \triangleright H = 1$ then since $\mathbb{T}_{n^{2}}(\overline{q})$ is a left $T_{m^{2}}(q)$-module via $\triangleright$ we would obtain $H = h^{m} \triangleright H = 1$ which is a contradiction. Therefore, $h \triangleright H = H^{t}$ with $t \in \{1, 2, \cdots, n-1\}$. Suppose $h \triangleright H = H^{t}$ with $t \neq 1$. Then using again \leref{primitive} we obtain $h \triangleright X \in \mathcal{P}_{H^{t},\, 1}(\mathbb{T}_{n^{2}}(\overline{q}))$ so $h \triangleright X = \alpha(1 - H^{t})$. By induction we obtain $X = h^{m} \triangleright X = \alpha(1 - H^{t_{m}})$ for some $t_{m} \in \{0, 1, \cdots, n-1\}$ and we have reached a contradiction. Thus we need to have $h \triangleright H = H$ and $h \triangleright X \in \mathcal{P}_{H,\, 1}(\mathbb{T}_{n^{2}}(\overline{q}))$ hence $h \triangleright X = a(1 - H) + b X$ for some $a$, $b \in k$. Furthermore, this leads us to $x \triangleright H \in \mathcal{P}_{h \triangleright H,\, H}(\mathbb{T}_{n^{2}}(\overline{q})) = \mathcal{P}_{H,\, H}(\mathbb{T}_{n^{2}}(\overline{q}))$, i.e. $x \triangleright H = 0$.  Now since $ \triangleright : T_{m^{2}}(q) \ot \mathbb{T}_{n^{2}}(\overline{q}) \to \mathbb{T}_{n^{2}}(\overline{q})$  is in particular a coalgebra map we have:
\begin{eqnarray*}
\Delta (x \triangleright X) &\stackrel{\equref{8}}{=}& x_{(1)} \triangleright X_{(1)} \ot x_{(2)} \triangleright X_{(2)}\\
&{=}& x \triangleright X \ot h \triangleright H + x \triangleright 1 \ot h \triangleright X + 1 \triangleright X \ot x \triangleright H + 1 \triangleright 1 \ot x \triangleright X\\
&{=}& x \triangleright X \ot H + 1 \ot x \triangleright X
\end{eqnarray*}
thus $x \triangleright X \in \mathcal{P}_{H,\, 1}(\mathbb{T}_{n^{2}}(\overline{q}))$, i.e. $x \triangleright X = \alpha (1 - H) + \beta X$ for some $\alpha$, $\beta \in k$.

Now we check under what conditions the two actions are compatible with the relations satisfied by the generators of the two Taft algebras. To start with, it is obvious that $x^{m} \triangleright H = 0$. By induction one can easily prove that $h^{i}  \triangleright X = a(1 + b + \cdots + b^{i-1}) (1 - H) + b^{i} X$ for all $i \in \{1, 2, \cdots, m\}$. As $h^{m} = 1$ we get:
\begin{eqnarray}
b^{m} = 1, \qquad a (1 + b + \cdots + b^{m-1}) = 0.
\end{eqnarray}
Furthermore, we have:
\begin{eqnarray*}
xh \triangleright X &=& b \alpha - b \alpha H + b \beta X\\
qhx \triangleright X &=& q(\alpha + a \beta) - q(\alpha + a \beta) H + q b \beta X
\end{eqnarray*}
which yields
\begin{eqnarray*}
q(\alpha + a \beta) = b \alpha, \qquad q b \beta = b \beta.
\end{eqnarray*}
As $q \neq 1$ we obtain $b \beta = 0$ and since $b^{m} = 1$ it follows that $\beta = 0$. Finally, one can now easily check that $x^{m} \triangleright X = 0$. To summarize, so far we obtained:
\begin{eqnarray*}
h \triangleright H = H, \,\, x \triangleright H = 0, \,\, x \triangleright X = \alpha (1-H), \,\, h \triangleright  X = a(1 - H) + bX
\end{eqnarray*}
where $b^{m} =1$, $\alpha (q - b) = 0$ and $a (1 + b + \cdots + b^{m-1}) = 0$.

In the same manner, it can be proved that $(T_{m^{2}}(q) ,\, \triangleleft)$ is a right $\mathbb{T}_{n^{2}}(\overline{q})$ - module coalgebra if and only if:
\begin{eqnarray*}
h \triangleleft H = h, \,\, h \triangleleft X = 0, \,\, x \triangleleft X = \mu (1-h), \,\, x \triangleleft  H = \gamma (1 - h) + \sigma x
\end{eqnarray*}
where $\sigma^{n} =1$, $\mu (1 - \overline{q} \sigma) = 0$, $\gamma (1 + \sigma + \cdots + \sigma^{n-1}) = 0$.

Next we impose \equref{mp2} and \equref{mp3} to be fulfilled by the pairs $(\triangleright, \, \triangleleft)$ depicted above. To start with, we can easily prove by induction that $h^{m}  \triangleleft H = h^{m}$ and $h^{m}  \triangleleft X = 0$ which are trivially fulfilled since $h^{m} = 1$. We also have:
\begin{eqnarray*}
xh \triangleleft H &=& \gamma h - \gamma h^{2} + \sigma xh \\
qhx \triangleleft H &=& q\gamma h -  q\gamma h^{2} + q \sigma hx
\end{eqnarray*}
which by \equref{mp3} implies $\gamma = q \gamma$ and since $q \neq 1$ we get $\gamma = 0$. Furthermore, the following identities:
\begin{eqnarray*}
xh \triangleleft X &=& b \mu h - b \mu h^{2} + (a - a \sigma) xh \\
qhx \triangleleft X &=& q \mu h -  q \mu h^{2}
\end{eqnarray*}
yield $a (\sigma - 1) = 0$ and $\mu (b - q) = 0$. Finally, by induction, we obtain $x^{m}  \triangleleft H = \sigma^{m} x^{m}$ which is again trivially fulfilled since $x^{m} = 0$.

We move on to checking \equref{mp2}. Again by induction we obtain $h \triangleright H^{n} = H^{n}$ and $x \triangleright H^{n} = 0$, equalities trivially fulfilled since $H^{n} = 1$.  Now from the following equalities:
\begin{eqnarray*}
h  \triangleright XH &=& a H - a H^{2} + b \overline{q} HX\\
h  \triangleright \overline{q} HX &=&  \overline{q}  a H - \overline{q}  a H^{2} + b \overline{q} HX
\end{eqnarray*}
we obtain $\overline{q}  a = a$ and since $\overline{q}  \neq 1$ we get $a = 0$. Finally, the following equalities:
\begin{eqnarray*}
x  \triangleright XH &=& \alpha H - \alpha H^{2}\\
x  \triangleright \overline{q} HX &=&  \overline{q}  \alpha \sigma H - \overline{q} \alpha \sigma H^{2}
\end{eqnarray*}
yield $\alpha (1 - \sigma \overline{q}) = 0$.

We arrived at the last compatibility condition, namely \equref{mp4}. For the pairs $(h,\, H)$, $(h,\, X)$ and $(x, \, H)$ the aforementioned compatibility is trivially fulfilled. We are left to check \equref{mp4} for the pair $(x, \, X)$. In this case it comes down to the following:
\begin{eqnarray*}
x \triangleleft X \ot h \triangleright H + x \triangleleft 1 \ot h \triangleright X + 1 \triangleleft X \ot x \triangleright H + 1 \triangleleft 1 \ot x \triangleright X = \\
h \triangleleft H \ot x \triangleright X + h \triangleleft X \ot x \triangleright 1 + x \triangleleft H \ot 1 \triangleright X + x \triangleleft X \ot 1 \triangleright 1
\end{eqnarray*}
which is equivalent to:
\begin{eqnarray*}
\mu (1 - h) \ot H + x \ot bX + 1 \ot \alpha (1 - H) = h \ot \alpha (1 - H) + \sigma x \ot X + \mu (1 - h) \ot 1.
\end{eqnarray*}
Therefore, we obtain $b = \sigma$ and $\alpha = \mu$. Putting all together, we have obtained the following matched pairs:
\begin{eqnarray*}
&& h \triangleright H = H,\,\, x \triangleright H = 0,\,\, x \triangleright X = \alpha(1 - H),\,\, h \triangleright X = \sigma X\\
&& h  \triangleleft  H = h,\,\, h \triangleleft X = 0,\,\, x  \triangleleft X = \alpha (1 - h),\,\, x  \triangleleft H = \sigma x
\end{eqnarray*}
where $\sigma^{n} = 1 = \sigma^{m}$, $\alpha (1 - \overline{q} \sigma) = 0$, $\alpha (\sigma - q) = 0$. 

We distinguish two cases based on whether $\overline{q} =  q^{-1}$ or $\overline{q} \neq q^{-1}$:

$1)$ If $\overline{q} \neq  q^{-1}$ then we can not have both $(\sigma - q) = 0$ and $(1 - \overline{q} \sigma) = 0$ so we obtain $\alpha = 0$. This gives rise to the first matched pair in the statement of our result. 

$2)$ Assume now that $\overline{q} =  q^{-1}$. To start with, as $\overline{q} \in P_{n}(k)$ and $ q^{-1} = q^{m-1} \in P_{m}(k)$ we obtain $\overline{q} \in P_{n}(k) \bigcap P_{m}(k)$ which is only possible if $m=n$. Now, if $\alpha = 0$ we obtain again our first matched pair while if $\alpha \neq 0$ we must have $\sigma =  \overline{q}  = q^{-1}$ and this gives rise to the second matched pair.
Although initially we only imposed the compatibility conditions defining a matched pair to be fulfilled for the generators of the two Hopf algebras, it can be easily seen now that they are fulfilled for all elements of $\mathbb{T}_{n^{2}}(\overline{q})$ and respectively $T_{m^{2}}(q)$. For example, in the case of the second matched pair whose formula is more complicated, we have:
\begin{eqnarray*}
x^{t} \triangleleft X &=& \alpha \bigl(q^{t-1} + q^{t-2} + \cdots + 1\bigl) x^{t-1} - \alpha q^{t-1} \bigl(q^{t-1} + q^{t-2} + \cdots + 1\bigl) hx^{t-1}\\
x \triangleright X^{t} &=& \alpha \bigl(q^{t-1} + q^{t-2} + \cdots + 1\bigl) X^{t-1} - \alpha q^{t-1} \bigl(q^{t-1} + q^{t-2} + \cdots + 1\bigl) HX^{t-1}
\end{eqnarray*}
for all $t \in \{1, 2, \cdots n\}$. Moreover, one can see that these formulae respect the relations $x^{n} = 0$ and $X^{n} =0$ from the two Taft algebras just as
a simple consequence of the identity $q^{n-1} + q^{n-2} + \cdots + 1 = 0$.
\end{proof}

Having listed all possible matched pairs between two Taft algebras
in \thref{main} the next step in our approach is to describe the
corresponding bicrossed products.

\begin{corollary}\colabel{descriere}
Let $m$, $n \in \NN^{*} \setminus \{1\}$, $d = (m,\, n)$ and $q \in P_{m}(k)$, $\overline{q} \in P_{n}(k)$.

\begin{enumerate}
\item[1)] If $\overline{q} \neq q^{n-1}$ then a Hopf algebra $E$ factors through $\mathbb{T}_{n^{2}}(\overline{q})$ and $T_{m^{2}}(q)$ if and only if $E \cong \mathrm{T}_{n,\,m}^{\, \sigma}(\overline{q},\,q)$ for some $\sigma \in U_{d}(k)$, where we denote by $\mathrm{T}_{n,\,m}^{\, \sigma}(\overline{q},\,q)$ the Hopf algebra generated by $h$, $x$, $H$ and $X$ subject to the relations:
\begin{eqnarray*}
h^{m} = H^{n} = 1,&&  x^{m} = X^{n} = 0,\,\,\,\,\,\,\,\, xh = q hx, \,\,\,\,\,\,\,\, XH = \overline{q}HX \\
xH = \sigma Hx,&& hX = \sigma Xh,\,\,\,\,\,\,\,\, xX = \sigma Xx,\,\,\,\,\,\,\,\, hH = Hh
\end{eqnarray*}
such that $h$, $H$ are group-like elements, $x$ is an $(h,1)$-primitive element, $X$ is an $(H,1)$-primitive element and the antipode is given by:
\begin{eqnarray*}
S(h) = h^{m-1},\,\,\, S(H) = H^{n-1}, \,\,\, S(x) = -q^{m-1}h^{m-1}x, \,\,\, S(X) = - \overline{q}^{n-1} H^{n-1}X.
\end{eqnarray*}
\item[2)] If $\overline{q} = q^{n-1}$ then a Hopf algebra $E$ factors through $\mathbb{T}_{n^{2}}(q^{n-1})$ and $T_{n^{2}}(q)$ if and only if $E \cong \mathrm{T}_{n,\,n}^{\, \sigma}(q^{n-1},\,q)$ for some $\sigma \in U_{n}(k)$ or $E \cong \mathrm{Q}_{n}^{\alpha}(q)$ for some $\alpha \in k^{*}$ where we denote by $\mathrm{Q}_{n}^{\alpha}(q)$ the Hopf algebra generated by $h$, $x$, $H$ and $X$ subject to the relations:
\begin{eqnarray*}
h^{n} = H^{n} = 1,\,\,\,\,\,\,\,\, x^{n} = X^{n} = 0,\,\,\,\,\,\,\,\, xh = q hx, \,\,\,\,\,\,\,\, XH = q^{n-1}HX \\
xH = q Hx,\,\,\,\, hX = q Xh,\,\,\,\, xX - qXx = \alpha (1-Hh),\,\,\,\, hH = Hh
\end{eqnarray*}
such that $h$, $H$ are group-like elements, $x$ is an $(h,1)$-primitive element, $X$ is an $(H,1)$-primitive element and the antipode is given by:
\begin{eqnarray*}
S(h) = h^{n-1},\,\,\, S(H) = H^{n-1}, \,\,\, S(x) = -q^{n-1}h^{n-1}x, \,\,\, S(X) = - q H^{n-1}X.
\end{eqnarray*}
\end{enumerate}
\end{corollary}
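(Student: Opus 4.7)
The plan is to combine \thref{main} with \thref{carMaj} and then read off the defining relations directly from the bicrossed product construction. By \thref{carMaj}, a Hopf algebra $E$ factors through $\mathbb{T}_{n^{2}}(\overline{q})$ and $T_{m^{2}}(q)$ if and only if $E \cong \mathbb{T}_{n^{2}}(\overline{q}) \bowtie T_{m^{2}}(q)$ for some matched pair, and \thref{main} already enumerates all such matched pairs. So the substantive task is to unpack the multiplication and antipode of each bicrossed product in terms of the generators $H, X$ of $\mathbb{T}_{n^{2}}(\overline{q})$ and $h, x$ of $T_{m^{2}}(q)$, identified inside $\mathbb{T}_{n^{2}}(\overline{q}) \bowtie T_{m^{2}}(q)$ as $H \bowtie 1$, $X \bowtie 1$, $1 \bowtie h$, $1 \bowtie x$ respectively.

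First I would observe that the relations $h^{m}=1$, $x^{m}=0$, $xh=qhx$ and $H^{n}=1$, $X^{n}=0$, $XH=\overline{q}HX$ are inherited from the two embeddings, so only the four cross relations $hH$, $hX$, $xH$, $xX$ need to be computed. For these I would apply the formula \equref{0010}, namely $(a \bowtie g)(c \bowtie t)=a(g_{(1)}\triangleright c_{(1)}) \bowtie (g_{(2)}\triangleleft c_{(2)})t$, to the pairs $(1\bowtie h)(H\bowtie 1)$, $(1\bowtie h)(X\bowtie 1)$, $(1\bowtie x)(H\bowtie 1)$, $(1\bowtie x)(X\bowtie 1)$, using $\Delta(h)=h\ot h$, $\Delta(x)=x\ot h+1\ot x$ and the analogous coproducts for $H$, $X$. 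In the first family of matched pairs this immediately produces $hH=Hh$, $hX=\sigma Xh$, $xH=\sigma Hx$, $xX=\sigma Xx$, while in the second family the only non-trivial modification is the last one, which gives $xX=qXx+\alpha(1-Hh)$; these are precisely the relations claimed. The coalgebra structure on the bicrossed product is the tensor product of coalgebras, so $h$, $H$ are group-like, $x$ is $(h,1)$-primitive and $X$ is $(H,1)$-primitive, and the antipode formulas follow from \equref{antipbic} together with $S(h)=h^{m-1}$, $S(x)=-xh^{m-1}$ (and the analogous identities in $\mathbb{T}_{n^{2}}(\overline{q})$), after rewriting $xh^{m-1}=q^{m-1}h^{m-1}x$ and $XH^{n-1}=\overline{q}^{\,n-1}H^{n-1}X$ to arrive at the stated expressions.

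The only remaining point, and the one step that requires a small argument, is that the listed relations form a \emph{complete} presentation. For this I would let $\widetilde{E}$ denote the abstract Hopf algebra defined by the generators and relations in the statement. Since these relations are all satisfied by $\mathbb{T}_{n^{2}}(\overline{q}) \bowtie T_{m^{2}}(q)$ (as verified above), there is a surjective Hopf algebra map $\widetilde{E} \twoheadrightarrow \mathbb{T}_{n^{2}}(\overline{q}) \bowtie T_{m^{2}}(q)$. On the other hand, the relations allow every word in $h,x,H,X$ to be rewritten as a linear combination of the monomials $H^{i}X^{j}h^{r}x^{s}$ with $0\leq i,j\leq n-1$, $0\leq r,s\leq m-1$; in the first case this is immediate because each cross relation commutes a letter from $\{h,x\}$ past one from $\{H,X\}$ up to a scalar, and in the second case the extra additive term $\alpha(1-Hh)$ lies in the span of shorter monomials so a straightforward induction on length still yields the same spanning set. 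Consequently $\dim \widetilde{E}\leq m^{2}n^{2}=\dim(\mathbb{T}_{n^{2}}(\overline{q}) \bowtie T_{m^{2}}(q))$, forcing the surjection to be an isomorphism.

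The main obstacle is this last completeness check in the case of the deformed family $\mathrm{Q}_{n}^{\alpha}(q)$, where the relation $xX-qXx=\alpha(1-Hh)$ is inhomogeneous and one must be careful when reordering longer monomials such as $x^{s}X^{j}$; however, because the correction term always reduces the total $x$/$X$-degree, a lexicographic induction produces the desired basis without incident and the dimension bound goes through.
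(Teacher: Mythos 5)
Your proposal is correct and follows essentially the same route as the paper: invoke \thref{carMaj} together with the list of matched pairs from \thref{main}, then compute the four cross relations $hH$, $hX$, $xH$, $xX$ from the multiplication formula \equref{0010} and read off the coalgebra structure and antipode from the tensor-product coalgebra structure and \equref{antipbic}. The only difference is that you explicitly verify that the listed relations give a complete presentation via the spanning-set/dimension-count argument, a point the paper's proof leaves implicit; this is a welcome extra precaution but not a change of method.
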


\begin{proof}
The proof relies on \thref{carMaj} and \thref{main}. Suppose first that $\overline{q} \neq q^{n-1}$. We will see that $\mathrm{T}_{n,\,m}^{\, \sigma}(\overline{q},\,q)$ is the bicrossed product $\mathbb{T}_{n^{2}}(\overline{q}) \bowtie T_{m^{2}}(q)$ corresponding to the matched pair described in the first part of \thref{main}. Indeed, up to canonical identification, $\mathbb{T}_{n^{2}}(\overline{q}) \bowtie T_{m^{2}}(q)$ is generated as an algebra by $X = X \bowtie 1$, $H = H \bowtie 1$, $x = 1 \bowtie x$ and $h = 1 \bowtie h$. Then, we have:
\begin{eqnarray*}
xX &=& (1 \bowtie x) (X \bowtie 1) = x_{(1)}  \triangleright X_{(1)} \bowtie x_{(2)} \triangleleft X_{(2)}\\
&=& x  \triangleright X \bowtie h \triangleleft H + x  \triangleright 1 \bowtie h \triangleleft X + 1 \triangleright X \bowtie x \triangleleft H + 1 \triangleright 1 \bowtie x \triangleleft X\\
&=& X \bowtie \sigma x = \sigma Xx.
\end{eqnarray*}
In the same manner it can be proved that $hH = Hh$, $xH = \sigma Hx$ and $hX = \sigma Xh$.

Assume now that $\overline{q} = q^{n-1}$. Then, according to \thref{main} we have two families of matched pairs. The bicrossed product corresponding to the first family is just the one described above. The proof will be finished once we prove that the bicrossed product corresponding to the matched pair in \thref{main}, 2) coincides with $\mathrm{Q}_{n}^{\alpha}(q)$. Indeed, in this case we have:
\begin{eqnarray*}
xX &=& (1 \bowtie x) (X \bowtie 1) = x_{(1)}  \triangleright X_{(1)} \bowtie x_{(2)} \triangleleft X_{(2)}\\
&=& x  \triangleright X \bowtie h \triangleleft H + x  \triangleright 1 \bowtie h \triangleleft X + 1 \triangleright X \bowtie x \triangleleft H + 1 \triangleright 1 \bowtie x \triangleleft X\\
&=& \alpha (1 - H) \bowtie h + X \bowtie qx + 1 \bowtie \alpha(1-h)\\
&=& qXx + \alpha(1-Hh)\\
xH &=& (1 \bowtie x)(H \bowtie 1) =  x_{(1)}  \triangleright H \bowtie x_{(2)} \triangleleft H\\
 &=& x  \triangleright H \bowtie h \triangleleft H + 1  \triangleright H \bowtie x \triangleleft H = H \bowtie qx\\
 &=& qHx.
\end{eqnarray*}
Similarly, one can see that $hH = Hh$ and $hX = qXh$ and the proof is now complete.
\end{proof}

We are now in a position to prove our first result on the classification of Hopf algebras which factors through two Taft algebras.

\begin{theorem}\thlabel{clasificare}
With the above notations we have:
\begin{enumerate}
\item[1)] If $n \neq m$ or $m=n$ and $\overline{q} \neq q$ then there exists an isomorphism of Hopf algebras $\mathrm{T}_{n,\,m}^{\, \sigma}(\overline{q},\,q) \cong \mathrm{T}_{n,\,m}^{\, \sigma '}(\overline{q},\,q)$ if and only if $\sigma = \sigma '$;\\
\item[2)] If $m=n$ and $\overline{q} = q$ then there exists an isomorphism of Hopf algebras $\mathrm{T}_{n,\,n}^{\, \sigma}(q,\, q) \cong \mathrm{T}_{n,\,n}^{\, \sigma '}(q,\, q)$ if and only if $\sigma ' = \sigma$ or $\sigma ' = \sigma^{-1}$;\\
\item[3)] There exists an isomorphism of Hopf algebras $\mathrm{Q}_{n}^{\alpha}(q) \cong \mathrm{Q}_{n}^{1}(q)$ for all $\alpha \in k^{*}$. Moreover, $\mathrm{Q}_{n}^{1}(q)$ is isomorphic to the Drinfeld double $D\bigl(T_{n^{2}}(q)\bigl)$;\\
\item[4)] $\mathrm{Q}_{n}^{1}(q)$ is not isomorphic to any of the
Hopf algebras $\mathrm{T}_{n,\,n}^{\, \sigma}(q^{n-1},\, q)$.
\end{enumerate}
\end{theorem}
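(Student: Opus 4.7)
The strategy is to separate the two families by a cheap algebra invariant: the number of algebra morphisms $E \to k$, which, since $E$ is finite dimensional, equals the cardinality of $\mathcal{G}(E^{\ast})$ and is therefore preserved under any Hopf algebra isomorphism. I do not need the full Hopf structure here—the algebra structure suffices—but the conclusion propagates to any putative Hopf isomorphism.

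First I would observe that, regardless of which presentation from \coref{descriere} we are dealing with, any algebra morphism $\phi : E \to k$ must satisfy $\phi(x) = \phi(X) = 0$ (as $x$ and $X$ are nilpotent) while $\phi(h), \phi(H) \in U_{n}(k)$, which has cardinality $n \geq 2$ since $q$ and $\overline{q}$ are primitive $n$-th roots of unity in $k$.

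For $E = \mathrm{T}_{n,n}^{\,\sigma}(q^{n-1},q)$, each of the remaining defining relations listed in \coref{descriere} involves $x$ on both sides, or $X$ on both sides, or reduces to $hH = Hh$; all of them are satisfied automatically once $x$ and $X$ are sent to zero. Hence any pair $\bigl(\phi(h), \phi(H)\bigr) \in U_{n}(k) \times U_{n}(k)$ extends to a (unique) algebra morphism, giving exactly $n^{2}$ algebra maps.

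For $E = \mathrm{Q}_{n}^{1}(q)$, the extra relation $xX - qXx = 1 - Hh$ becomes, after applying $\phi$, the constraint $\phi(h)\phi(H) = 1$; this is the only additional condition, so $\phi(H)$ is determined by $\phi(h) \in U_{n}(k)$ and only $n$ algebra maps survive. Since $n \geq 2$ yields $n \neq n^{2}$, no Hopf algebra isomorphism $\mathrm{Q}_{n}^{1}(q) \cong \mathrm{T}_{n,n}^{\,\sigma}(q^{n-1},q)$ can exist. There is really no hard step in this argument; the only small point one has to check is that the inequality persists for every admissible $k$, which is automatic because the arithmetic of $k$ enters both counts only through $|U_{n}(k)| = n \geq 2$.
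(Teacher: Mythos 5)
Your proposal addresses only part (4) of the theorem; parts (1)--(3) --- the classification of the $\mathrm{T}_{n,\,m}^{\,\sigma}(\overline{q},\,q)$ among themselves and the identification of $\mathrm{Q}_{n}^{1}(q)$ with the Drinfel'd double --- are not touched at all, and those are precisely the parts that require the machinery of \thref{toatemorf} (the parametrization of Hopf algebra maps between bicrossed products by quadruples $(u,p,r,v)$). So as a proof of the full statement there is a substantial gap: most of the theorem is missing.

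For part (4) itself, however, your argument is correct and takes a genuinely different route from the paper. You count algebra morphisms $E\to k$ (equivalently $|\mathcal{G}(E^{\ast})|$), which is an invariant of the algebra structure alone. The counts are right: in both presentations of \coref{descriere} the nilpotency of $x$ and $X$ forces $\phi(x)=\phi(X)=0$, every remaining relation of $\mathrm{T}_{n,\,n}^{\,\sigma}(q^{n-1},\,q)$ then holds automatically, so the $n^{2}$ choices of $\bigl(\phi(h),\phi(H)\bigr)\in U_{n}(k)\times U_{n}(k)$ all occur (the presentation does define the algebra, since the quotient of the free algebra by the listed relations has dimension at most $n^{4}$ and surjects onto the $n^{4}$-dimensional bicrossed product), whereas in $\mathrm{Q}_{n}^{1}(q)$ the relation $xX-qXx=1-Hh$ forces $\phi(H)\phi(h)=1$ and leaves only $n$ morphisms; since $n\geqslant 2$, $n\neq n^{2}$. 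The paper instead assumes an isomorphism exists, uses the quadruple classification already established in parts (1)--(2) to pin down its form, and derives the contradiction $1=H$ from \equref{C7} evaluated at $(x,X)$. Your invariant argument is more elementary and self-contained, and it even shows the two algebras are non-isomorphic as algebras; the paper's approach buys nothing extra here beyond uniformity with the rest of the proof, so for this part your route is a clean improvement --- but you still owe proofs of parts (1), (2) and (3).
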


\begin{proof}
1) and 2). We will prove the first two statements together. Suppose that
$\mathrm{T}_{n,\,m}^{\, \sigma}(\overline{q},\,q)$ and
resppectively $\mathrm{T}_{n,\,m}^{\, \sigma '}(\overline{q},\,q)$
are the bicrossed products corresponding to the matched pairs
$(\mathbb{T}_{n^{2}}(\overline{q}), \, T_{m^{2}}(q),\,
\triangleright, \, \triangleleft)$ and
$(\mathbb{T}_{n^{2}}(\overline{q}), \, T_{m^{2}}(q),\,
\triangleright ', \, \triangleleft ')$ implemented by $\sigma$ and
$ \sigma '$ as in \thref{main}, 1). As mentioned before, our
approach relies on \thref{toatemorf}: any Hopf algebra map $\psi:
\mathrm{T}_{n,\,m}^{\, \sigma}(\overline{q},\,q) \to
\mathrm{T}_{n,\,m}^{\, \sigma '}(\overline{q},\,q)$ is
parameterized by a quadruple $(u, p, r, v)$ consisting of unital
coalgebra maps $u: \mathbb{T}_{n^{2}}(\overline{q}) \to
\mathbb{T}_{n^{2}}(\overline{q})$, $p:
\mathbb{T}_{n^{2}}(\overline{q}) \to T_{m^{2}}(q)$, $r:
T_{m^{2}}(q) \to \mathbb{T}_{n^{2}}(\overline{q})$, $v:
T_{m^{2}}(q) \to T_{m^{2}}(q)$ satisfying the compatibility
conditions \equref{C1}-\equref{C8}. To start with, since all four
maps above are unital coalgebra maps, \leref{primitive} gives:
\begin{eqnarray*}
&& u(1) = 1,\,\, p(1) = 1,\,\, r(1) = 1,\,\, v(1) = 1\\
&& u(H) = H^{a},\,\, p(H) = h^{b},\,\, r(h) = H^{c},\,\, v(h) = h^{d}\\
&& u(X) \in \mathcal{P}_{H^{a},\, 1} \bigl(\mathbb{T}_{n^{2}}(\overline{q}) \bigl),\,\, p(X) \in \mathcal{P}_{h^{b},\, 1} \bigl(T_{m^{2}}(q)\bigl)\\
&& r(x) \in \mathcal{P}_{H^{c},\, 1} \bigl(\mathbb{T}_{n^{2}}(\overline{q}) \bigl),\,\, v(x) \in \mathcal{P}_{h^{d},\, 1} \bigl(T_{m^{2}}(q)\bigl).
\end{eqnarray*}
Applying \equref{C1} for $a = X$ yields:
\begin{equation}\eqlabel{22}
u(X) \ot h^{b} + 1 \ot p(X) = H^{a} \ot p(X) + u(X) \ot 1.
\end{equation}
Putting together  \equref{22} and the fact that $u(X) \in \mathcal{P}_{H^{a},\, 1} \bigl(\mathbb{T}_{n^{2}}(\overline{q}) \bigl)$ and respectively $p(X) \in \mathcal{P}_{h^{b},\, 1} \bigl(T_{m^{2}}(q)\bigl)$ we obtain the following possibilities for the pair of maps $(u,\,p)$:
\begin{eqnarray*}
&{\rm I.}& u(H) = H,\,\, u(X) = \alpha (1 - H),\,\, p(H) = h,\,\, p(X) =  \alpha (1 - h),\,\, \alpha \in k;\\
&{\rm II.}& u(H) = H,\,\, u(X) = \alpha (1 - H) + \beta X,\,\, p(H) = 1,\,\, p(X) =  0,\,\, \alpha, \beta \in k;\\
&{\rm III.}& u(H) = H,\,\, u(X) = \alpha (1 - H),\,\, p(H) = h^{b},\,\, p(X) =  \alpha (1 - h^{b}),\,\, \alpha \in k,\, b \in \NN^{*}\setminus\{1\};\\
&{\rm IV.}& u(H) = 1,\,\, u(X) = 0,\,\, p(H) = h,\,\, p(X) =  \alpha (1 - h) + \beta x,\,\, \alpha, \beta \in k;\\
&{\rm V.}& u(H) = H^{a},\,\, u(X) = \alpha(1 - H^{a}),\,\, p(H) = h,\,\, p(X) =  \alpha (1 - h),\,\, \alpha \in k,\, a \in \NN^{*}\setminus\{1\};\\
&{\rm VI.}& u(H) = 1,\,\, u(X) = 0,\,\, p(H) = 1,\,\, p(X) =  0;\\
&{\rm VII.}& u(H) = 1,\,\, u(X) = 0,\,\, p(H) = h^{b},\,\, p(X) =  \alpha (1 - h^{b}),\,\, \alpha \in k,\, b \in \NN^{*}\setminus\{1\};\\
&{\rm VIII.}& u(H) = H^{a},\,\, u(X) = \alpha(1 - H^{a}),\,\, p(H) = 1,\,\, p(X) =  0,\,\, \alpha \in k,\, a \in \NN^{*}\setminus\{1\};\\
&{\rm IX.}& u(H) = H^{a},\,\, u(X) = \alpha (1 - H^{a}),\,\, p(H) = h^{b},\,\, p(X) =  \alpha (1 - h^{b}),\\
&& \alpha \in k,\,\,a, b \in \NN^{*}\setminus\{1\}.\\
\end{eqnarray*}
Now by applying \equref{C3} for $(a,\,b) = (X,\, H)$ and respectively $(a,\, b) = (H,\,X)$ we get:
\begin{eqnarray*}
u(XH) &=& u(X) H^{a} + p(X) \triangleright ' H^{a}\\
u(HX) &=& H^{a} \bigl(h^{b} \triangleright  ' u(X)\bigl).
\end{eqnarray*}
As $XH = \overline{q} HX$ we obtain:
\begin{equation}\eqlabel{co1}
u(X) H^{a} + p(X) \triangleright ' H^{a} = \overline{q} H^{a} \bigl(h^{b} \triangleright  ' u(X)\bigl).
\end{equation}
Analogously, using \equref{C4} in the same manner as above yields:
\begin{equation}\eqlabel{co2}
\bigl(p(X)  \triangleleft ' H^{a} \bigl) h^{b} = \overline{q} \bigl(h^{b} \triangleleft ' u(X) \bigl) h^{b} + \overline{q} h^{b} p(X).
\end{equation}
Suppose the maps $u$, $p$ are given as in I. Then the compatibility condition \equref{co1} gives:
\begin{eqnarray*}
\alpha (1 - H) H + \alpha (1 - h) \triangleright ' H = \overline{q} H \bigl(h \triangleright ' \alpha (1 - H)\bigl)
\end{eqnarray*}
which comes down to $\alpha = \alpha \overline{q}$ and since $\overline{q} \neq 1$ we get $\alpha = 0$. Therefore $u(X) = p(X) = 0$. This implies:
\begin{eqnarray*}
\psi(X \bowtie 1) = u(X_{(1)}) \bowtie p(X_{(2)}) = u(X) \bowtie p(H) + u(1) \bowtie p(X) = 0
\end{eqnarray*}
so in this case $\psi$ is not an isomorphism. Using the same strategy as above, a rather long but straightforward computation based on the compatibility conditions \equref{co1} and \equref{co2} rules out the other cases as well except for the second one and the fourth one if $m=n$ and $q = \overline{q}$.
Indeed, suppose the maps $u$, $p$ are given as in II. Then the compatibility condition \equref{co2} is trivially fulfilled while \equref{co1} gives:
\begin{eqnarray*}
\bigl(\alpha (1 - H) H + \beta X\bigl) H = \overline{q} H \bigl(\alpha (1 - H) + \beta X\bigl)
\end{eqnarray*}
which implies $\alpha = \alpha \overline{q}$ and since
$\overline{q} \neq 1$ we get $\alpha = 0$. Thus $u(X) = \beta X$
and $p(H) = 1$. Furthermore, using \equref{C3} and \equref{C4} we
obtain the following general formulas:
\begin{eqnarray*}
&& u(X^{j} H^{i}) = \beta^{j}\, X^{j}\, H^{i},\,\, {\rm where}\,\, \beta \in k,\,\, i,\,j \in \{0,\,1,\, \cdots,\, n-1\}\\
&& p(Y) = \epsilon(Y) 1,\,\, {\rm for} \,\, {\rm all} \,\, Y \in \mathbb{T}_{n^{2}}(\overline{q}).
\end{eqnarray*}
Finally, we discuss the fourth case, i.e. the maps $u$, $p$ are given as in IV. Then \equref{co1} is trivially fulfilled while \equref{co2} comes down to:
\begin{eqnarray*}
\alpha h - \alpha h^{2} + \beta xh = \overline{q} \alpha h - \overline{q} \alpha h^{2} + \beta \overline{q} hx
\end{eqnarray*}
so we get $\alpha = 0$ and $q = \overline{q}$. As we noticed before, $q = \overline{q}$ also implies $m = n$. Thus, if $m = n$ and $q = \overline{q}$ we can also have:
\begin{eqnarray*}
&& u(Y) = \epsilon(Y)1,\,\, {\rm for} \,\, {\rm all} \,\, Y \in \mathbb{T}_{n^{2}}(\overline{q})\\
&& p(X^{i}H^{j}) = \zeta^{i} X^{i}H^{j},\,\, {\rm where}\,\, \zeta \in k,\,\, i,\,j \in \{0,\,1,\, \cdots,\, n-1\}.
\end{eqnarray*}

Next we focus on the maps $r$ and $v$. To this end, we apply \equref{C2} for $g = X$ which yields:
\begin{eqnarray*}
r(x) \ot h^{d} + 1 \ot v(x) = H^{c} \ot v(x) + r(x) \ot 1.
\end{eqnarray*}
We have $r(x) \in \mathcal{P}_{H^{c},\, 1} \bigl(\mathbb{T}_{n^{2}}(\overline{q}) \bigl)$ and $v(x) \in \mathcal{P}_{h^{d},\, 1} \bigl(T_{m^{2}}(q)\bigl)$ and we proceed as in the case of the maps $u$ and $p$ but using this time the compatibility conditions \equref{C5} and \equref{C6}. We obtain:
\begin{eqnarray*}
&& r(y) = \epsilon(y) 1,\,\, {\rm for} \,\, {\rm all} \,\, y \in T_{m^{2}}(q)\\
&& v(x^{l} h^{t}) = \eta^{l}\, x^{l}\, h^{t},\,\, {\rm where}\,\, \eta \in k,\,\, l,\,t \in \{0,\,1,\, \cdots,\, m-1\}.
\end{eqnarray*}
In addition, if $q = \overline{q}$ (and thus $m = n$) we also have
the following pair of maps $(r,\,v)$:
\begin{eqnarray*}
&& r(x^{l} h^{t}) = \gamma^{l} x^{l} h^{t},\,\, {\rm where}\,\, \gamma \in k,\,\, l,\,t \in \{0,\,1,\, \cdots,\, m-1\}\\
&& v(y) = \epsilon(y) 1,\,\, {\rm for} \,\, {\rm all} \,\, y \in T_{m^{2}}(q).
\end{eqnarray*}

Suppose first that $m \neq n$ or $m = n$ and $q \neq \overline{q}$. Then, the only quadruple $(u,\, p,\, r,\, v)$ which might satisfy the compatibility conditions \equref{C1}-\equref{C8} is given as follows:
\begin{eqnarray*}
&& u(X^{j} H^{i}) = \beta^{j}\, X^{j}\, H^{i},\,\, {\rm where}\,\, \beta \in k,\,\, i,\,j \in \{0,\,1,\, \cdots,\, n-1\}\\
&& p(Y) = \epsilon(Y) 1,\,\, {\rm for} \,\, {\rm all} \,\, Y \in \mathbb{T}_{n^{2}}(\overline{q})\\
&& r(y) = \epsilon(y) 1,\,\, {\rm for} \,\, {\rm all} \,\, y \in T_{m^{2}}(q)\\
&& v(x^{l} h^{t}) = \eta^{l}\, x^{l}\, h^{t},\,\, {\rm where}\,\, \eta \in k,\,\, l,\,t \in \{0,\,1,\, \cdots,\, m-1\}
\end{eqnarray*}
and the corresponding Hopf algebra morphism $\psi:
\mathrm{T}_{n,\,m}^{\, \sigma}(\overline{q},\,q) \to
\mathrm{T}_{n,\,m}^{\, \sigma '}(\overline{q},\,q)$ is given by
$\psi(a \bowtie g) = u(a) \bowtie v(g)$ for any $a \in
\mathbb{T}_{n^{2}}(\overline{q})$ and $g \in T_{m^{2}}(q)$. First
notice that since $\psi (X \bowtie 1) = \beta X \bowtie 1$ and
$\psi(1 \bowtie x) = 1 \bowtie \eta x$ and we want $\psi$ to be an
isomorphism we need to have $\beta$, $\eta \in k^{*}$.

Now considering the way we obtained the maps $u$, $p$, $r$, $v$
listed above it is obvious that the compatibility conditions
\equref{C1}-\equref{C6} are fulfilled. We are left to check the
compatibility conditions \equref{C7} and \equref{C8} which in this
case boil down to:
\begin{eqnarray}
v(g) \triangleright ' u(b) = u(g \triangleright b) \eqlabel{e1}\\
v(g) \triangleleft ' u(b) = v(g \triangleleft b)\eqlabel{e2}
\end{eqnarray}
for all $g \in T_{m^{2}}(q)$, $b \in
\mathbb{T}_{n^{2}}(\overline{q})$. Now considering $(g,\, b) :=
(h,\, X)$ in \equref{e1} we obtain $\beta \sigma ' X = \beta
\sigma X$ and since $\beta \neq 0$ we must have $\sigma = \sigma '$.
To conclude, in this case, $\mathrm{T}_{n,\,m}^{\, \sigma}(\overline{q},\,q)$ is isomorphic to $\mathrm{T}_{n,\,m}^{\, \sigma
'}(\overline{q},\,q)$ if and only if $\sigma = \sigma '$.

Assume now that  $m = n$ and $q = \overline{q}$. Then, besides the
quadruple listed above which implies that $\mathrm{T}_{n,\,n}^{\,
\sigma}(q,\, q) \cong \mathrm{T}_{n,\,n}^{\, \sigma '}(q,\, q)$ if
and only if $\sigma = \sigma '$, we can also have the following
possibilities:

\begin{eqnarray*}
&{\rm I.}& u_{1}(Y) = \epsilon(Y)1,\,\, p_{1}(X^{i}H^{j}) = \zeta^{i} x^{i}h^{j},\,\, r_{1}(x^{l} h^{t}) = \gamma^{l} X^{l} H^{t},\,\, v_{1}(y) = \epsilon(y) 1;\\
&{\rm II.}& u_{2}(Y) = \epsilon(Y)1,\,\, p_{2}(X^{i}H^{j}) = \zeta^{i} x^{i}h^{j},\,\, r_{2}(y) = \epsilon(y) 1,\,\, v_{2}(x^{l} h^{t}) = \eta^{l} x^{l} h^{t};\\
&{\rm III.}& u_{3}(X^{j} H^{i}) = \beta^{j}\, X^{j}\, H^{i},\,\, p_{3}(Y) = \epsilon(Y)1,\,\, r_{3}(x^{l} h^{t}) = \gamma^{l} X^{l} H^{t},\,\, v_{3}(y) = \epsilon(y) 1;\\
&& {\rm where}\,\, i,\,j \in \{0,\,1,\, \cdots,\, n-1\},\,\, l,\,t \in \{0,\,1,\, \cdots,\, m-1\},\,\, \gamma,\, \zeta \in k^{*},\\
&& Y \in \mathbb{T}_{n^{2}}(\overline{q}),\,\, y \in T_{n^{2}}(q).
\end{eqnarray*}

Suppose the quadruple $(u,\, p,\, r,\, v)$ is given as in I. Then the compatibility condition \equref{C7} writes:
\begin{eqnarray*}
r_{1}(g) \epsilon(b) = p_{1} \bigl(g_{(1)} \triangleright b_{(1)} \bigl) \triangleright ' r_{1} \bigl(g_{(2)} \triangleleft b_{(2)}\bigl)
\end{eqnarray*}
for all $g \in T_{m^{2}}(q)$, $b \in
\mathbb{T}_{n^{2}}(\overline{q})$. Setting $(b,\, g) = (H, \, x)$
in the above equality yields $\gamma X = \gamma \sigma \sigma '
X$. If $\gamma = 0$ then we get $r_{1}(x) = 0$ and thus
\begin{eqnarray*}
\psi(1 \bowtie x) = r_{1}(x_{(1)}) \bowtie' v_{1}(x_{(2)}) = r_{1}(x)  \bowtie' 1 = 0.
\end{eqnarray*}
Therefore, $\psi$ is not bijective unless $\gamma \neq 0$. Hence
for $\mathrm{T}_{n,\,n}^{\, \sigma}(q,\, q)$ and
$\mathrm{T}_{n,\,n}^{\, \sigma '}(q,\, q)$ to be isomorphic we
need to have $\sigma \sigma ' = 1$. Furthermore, it can be easily
seen by a straightforward computation that \equref{C8} is also
fulfilled and $\psi$ is a Hopf algebra isomorphism if and only if
$\sigma \sigma ' = 1$.

If the quadruple $(u,\, p,\, r,\, v)$ is given as in II or III
then the corresponding Hopf algebra map $\psi$ is not an
isomorphism. Indeed, assume first that $(u,\, p,\, r,\, v)$ is
given as in II. Then, for all $a \in
\mathbb{T}_{n^{2}}(\overline{q})$ and $g \in T_{n^{2}}(q)$ we
have:
\begin{eqnarray*}
\psi(a \bowtie g) = \epsilon(a_{(1)})  \bigl( p(a_{(2)})
\triangleright' \epsilon(g_{(1)}) \bigl) \,\, \bowtie' \, \bigl(
p(a_{(3)}) \triangleleft' \epsilon(g_{(2)}) \bigl) \, v(g_{(3)}) = 1 \, \bowtie' \, p(a) v(g)
\end{eqnarray*}
and it is obvious that $\psi$ is not surjective. Similarly, if the quadruple $(u,\, p,\, r,\, v)$ is given as in III, then for all $a \in \mathbb{T}_{n^{2}}(\overline{q})$ and $g \in T_{n^{2}}(q)$ we have:
\begin{eqnarray*}
\psi(a \bowtie g) = u(a_{(1)}) \, \bigl( \epsilon(a_{(2)})
\triangleright' r(g_{(1)}) \bigl) \,\, \bowtie' \, \bigl(
\epsilon(a_{(3)}) \triangleleft' r(g_{(2)}) \bigl) \, \epsilon(g_{(3)}) = u(a) r(g) \, \bowtie' \, 1
\end{eqnarray*}
and as before $\psi$ is again not surjective.

3) Indeed, it is straightforward to see that the $k$-linear map $\varphi : \mathrm{Q}_{n}^{\alpha}(q) \to \mathrm{Q}_{n}^{1}(q)$ defined by:
\begin{eqnarray*}
\varphi(H) = H,\,\, \varphi(h) = h,\,\, \varphi(x) = x,\,\, \varphi(X) = \alpha^{-1}X
\end{eqnarray*}
is an isomorphism of Hopf algebras for all $\alpha \in k^{*}$.

For the second part of the statement, recall that the Drinfel'd
double $D\bigl(T_{n^{2}}(q)\bigl)$ factors through
$\bigl(T_{n^{2}}(q)^{\ast}\bigl)^{cop}$ and $T_{n^{2}}(q)$.
Putting together the two Hopf algebra isomorphisms
$T_{m^{2}}(q)^{\ast} \cong T_{m^{2}}(q)$ and $T_{m^{2}}(q)^{cop}
\cong T_{m^{2}}(q^{n-1})$ noted in \nameref{prel} we can conclude
that the Hopf algebras $\bigl(T_{n^{2}}(q)^{\ast}\bigl)^{cop}$ and
respectively $T_{n^{2}}(q^{n-1})$ are also isomorphic. Therefore,
the Drinfel'd double $D\bigl(T_{n^{2}}(q)\bigl)$ is one of the
bicrossed products between $T_{n^{2}}(q^{n-1})$ and
$T_{n^{2}}(q)$. To this end, we compute:
\begin{eqnarray*}
x \triangleleft X^{\ast}  &\stackrel{\equref{dubluact}}{=}&
X^{\ast} \bigl(S^{-1}(h) x\bigl)h + X^{\ast}
\bigl(S^{-1}(h)\bigl)x +
X^{\ast} \bigl(S^{-1}(x)\bigl)\\
&{=}& X^{\ast} (h^{n-1} x) h + X^{\ast}(-h^{n-1} x)
\stackrel{\equref{dual}}{=} h-1.
\end{eqnarray*}
We can now conclude that the matched pair which gives the
Drinfel'd double coincides with the matched pair depicted in
\thref{main}, 2) for $\alpha = -1$. Since we proved that the
corresponding matched pair, namely $\mathrm{Q}_{n}^{-1}(q)$, is
isomorphic to $\mathrm{Q}_{n}^{1}(q)$ the conclusion follows.

4) We use again \thref{toatemorf}. Suppose there exists an isomorphism $\psi$ between $\mathrm{Q}_{n}^{1}(q) = \mathbb{T}_{n^{2}}(q^{n-1}) \bowtie \mathbb{T}_{n^{2}}(q)$ and $\mathrm{T}_{n,\,n}^{\, \sigma}(q,\, q) = \mathbb{T}_{n^{2}}(q^{n-1}) \bowtie '  \mathbb{T}_{n^{2}}(q)$. Based on the first part of the proof it follows that $\psi$ is determined by the following quadruple of coalgebra maps:
\begin{eqnarray*}
&& u(X^{j} H^{i}) = \beta^{j}\, X^{j}\, H^{i},\,\, {\rm where}\,\, \beta \in k,\,\, i,\,j \in \{0,\,1,\, \cdots,\, n-1\}\\
&& p(Y) = \epsilon(Y) 1,\,\, {\rm for} \,\, {\rm all} \,\, Y \in \mathbb{T}_{n^{2}}(\overline{q})\\
&& r(y) = \epsilon(y) 1,\,\, {\rm for} \,\, {\rm all} \,\, y \in T_{m^{2}}(q)\\
&& v(x^{l} h^{t}) = \eta^{l}\, x^{l}\, h^{t},\,\, {\rm where}\,\, \eta \in k,\,\, l,\,t \in \{0,\,1,\, \cdots,\, m-1\}.
\end{eqnarray*}
Remark that the compatibilities \equref{C1} - \equref{C6}
are fulfilled. Now, in this case the compatibility condition
\equref{C7} comes down to $v(g) \triangleright ' u(b) = u(g
\triangleright b)$ which evaluated at $(g,\,b) = (x,\, X)$ gives
$\eta \beta x \triangleright ' X = u(1 - H)$. Therefore $1 = H$
and we have reached a contradiction.
\end{proof}

\begin{remark}
If $m=n=2$ and $q = \overline{q} = -1$ then the two Taft algebras
we considered coincide with the Sweedler's Hopf algebra. In this
case, \thref{clasificare} provides three isomorphism classes for
the Hopf algebras which factors through two Sweedler's Hopf
algebras, namely $\mathrm{T}_{2,\,2}^{1}(-1,\, -1)$,
$\mathrm{T}_{2,\,2}^{-1}(-1,\, -1)$ and respectively
$Q_{2}^{1}(-1)$. After carefully comparing \coref{descriere} and
\cite[Theorem 2.4]{Gabi} we obtain the following:
\begin{eqnarray*}
\mathrm{T}_{2,\,2}^{1}(-1,\, -1) = \mathbb{H}_{4}  \otimes
H_{4},\,\, \mathrm{T}_{2,\,2}^{-1}(-1,\, -1) = \mathcal{H}_{16,
0},\,\, Q_{2}^{1}(-1) =\mathcal{H}_{16, 1}.
\end{eqnarray*}
\end{remark}

In fact, \thref{clasificare} allows us to compute the number of
types of Hopf algebras which factor through two Taft algebras.

\begin{corollary}\colabel{nr}
Let $m$, $n \in \NN^{*} \setminus \{1\}$, $d = (m,\,n)$, $\nu(d) = |U_{d}(k)|$ and
consider $\sharp^{n,\,m}_{\overline{q},\,q}$ to be the number of
types of Hopf algebras which factor through
$\mathbb{T}_{n^{2}}(\overline{q})$ and $T_{m^{2}}(q)$. Then, we
have:
\begin{eqnarray*}
\sharp^{n,\,m}_{\overline{q},\,q} = \left\{\begin{array}{rcl} 3, &
\mbox{if}& m = n = 2\\ \nu(d) & \mbox{if}& m \neq n \,\,
\mbox{or} \,\, m = n \geqslant 3 \,\, \mbox{and} \,\, \overline{q}
\notin \{q,\, q^{n-1}\}\\ \frac{\nu(d)}{2} + 1& \mbox{if}& m =
n\geqslant 3, \,\, \overline{q} = q\,\,  \mbox{and}\,\, \nu(d)
\,\, \mbox{is}\,\, \mbox{even}\\ \frac{\nu(d)+1}{2}& \mbox{if}& m
= n\geqslant 3, \,\, \overline{q} = q\,\,  \mbox{and}\,\, \nu(d)
\,\, \mbox{is}\,\, \mbox{odd}\\  \nu(d)+1 & \mbox{if}& m = n
\geqslant 3, \,\, \overline{q} = q^{n-1} \end{array}\right.
\end{eqnarray*}
\end{corollary}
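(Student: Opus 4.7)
The plan is to apply \thref{clasificare} to the explicit list of bicrossed products supplied by \thref{main} and \coref{descriere}, counting isomorphism classes in each regime.

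I would first split the bicrossed products into two families: the Taft-type family $\{\mathrm{T}_{n,\,m}^{\,\sigma}(\overline{q},\,q) : \sigma \in U_{d}(k)\}$, which by \thref{main} is always present with $\nu(d)$ members, and the quantum-type family $\{\mathrm{Q}_{n}^{\alpha}(q) : \alpha \in k^{*}\}$, which by \thref{main} is present exactly when $\overline{q} = q^{n-1}$ (and, as noted in the proof of \thref{main}, that equality forces $m = n$). By \thref{clasificare}, 3) and 4), the second family, when it appears, collapses to a single class represented by $\mathrm{Q}_{n}^{1}(q)$, and this class is never isomorphic to any member of the first family. Hence the total count equals the number of Taft-type classes plus $\varepsilon$, where $\varepsilon = 1$ if $\overline{q} = q^{n-1}$ and $\varepsilon = 0$ otherwise.

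The Taft-type count is then read off from \thref{clasificare}, 1) and 2). When $m \neq n$, or when $m = n$ with $\overline{q} \neq q$, \thref{clasificare}, 1) yields $\nu(d)$ pairwise non-isomorphic classes; this handles case (b) of the statement (where $\overline{q} = q^{n-1}$ is excluded, so $\varepsilon = 0$) and provides the $\nu(d)$ summand in case (e) (where $\varepsilon = 1$). When $m = n$ with $\overline{q} = q$, \thref{clasificare}, 2) identifies $\sigma$ with $\sigma^{-1}$, so I count orbits of the involution $\sigma \mapsto \sigma^{-1}$ on the cyclic group $U_{d}(k)$ of order $\nu(d)$. A Burnside-style count gives $\bigl(\nu(d) + F\bigr)/2$ orbits, where $F = |\{\sigma \in U_{d}(k) : \sigma^{2} = 1\}| = \gcd(2,\nu(d))$; this yields $(\nu(d)+1)/2$ when $\nu(d)$ is odd and $\nu(d)/2 + 1$ when $\nu(d)$ is even, matching cases (d) and (c) respectively.

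The degenerate case $m = n = 2$ must be singled out because here $q = \overline{q} = -1$ simultaneously satisfies $\overline{q} = q$ and $\overline{q} = q^{n-1}$: \thref{clasificare}, 2) applies to the Taft-type algebras while an additional $Q$-class is also present. Since $\nu(2) = 2$ and the involution on $U_{2}(k) = \{\pm 1\}$ is trivial (each element is self-inverse), the Taft contribution is $2$, and with $\varepsilon = 1$ the total is $3$, yielding case (a). The main bookkeeping point, rather than any genuine obstacle, is to verify that for $m = n \geq 3$ one has $q^{n-2} \neq 1$ and hence $q \neq q^{n-1}$; this guarantees that the three options $\overline{q} \notin \{q,\, q^{n-1}\}$, $\overline{q} = q$, and $\overline{q} = q^{n-1}$ are mutually exclusive and correspond cleanly to cases (b), (c)/(d), and (e).
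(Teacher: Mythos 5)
Your proposal is correct and follows essentially the same route as the paper: apply \thref{clasificare} to the list of bicrossed products from \coref{descriere}, count $\sigma$'s up to the relevant identification (including the orbit count for $\sigma \mapsto \sigma^{-1}$ when $\overline{q}=q$, which is the odd/even discussion the paper alludes to), and add one class for $\mathrm{Q}_{n}^{1}(q)$ when $\overline{q}=q^{n-1}$. Your explicit treatment of the overlap at $m=n=2$ and the verification that $q \neq q^{n-1}$ for $n \geqslant 3$ are worthwhile details that the paper leaves implicit, but they do not change the argument.
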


\begin{proof}
Suppose first that $ m \neq n$ or  $m = n \geqslant 3$ and
$\overline{q} \notin \{q,\, q^{n-1}\}$. Then according to
\thref{clasificare} two Hopf algebras $\mathrm{T}_{n,\,m}^{\,
\sigma}(\overline{q},\,q)$ and respectively
$\mathrm{T}_{n,\,m}^{\, \sigma '}(\overline{q},\,q)$, where
$\sigma$, $\sigma ' \in U_{d}(k)$, are isomorphic if and only if
$\sigma = \sigma ' $ and the conclusion follows. Now if $m = n
\geqslant 3$ and $\overline{q} = q$ then $\mathrm{T}_{n,\,n}^{\,
\sigma}(q,\, q)$ and respectively $\mathrm{T}_{n,\,n}^{\, \sigma
'}(q,\, q)$, where $\sigma$, $\sigma ' \in U_{d}(k)$, are
isomorphic if and only if $\sigma = \sigma ' $ or $\sigma \sigma '
=1$. This leads to a discussion on whether $\nu(d)$ is odd or even
and the two formulas in our statement can be easily derived.
Finally, let $m=n \geqslant 3 $ and  $\overline{q} = q^{n-1}$. In
this case, the Hopf algebras which factor through
$\mathbb{T}_{n^{2}}(q^{n-1})$ and $T_{n^{2}}(q)$ are
$\mathrm{Q}_{n}^{1}(q)$  together with $\mathrm{T}_{n,\,n}^{\,
\sigma}(q^{n-1},\, q)$, for some $\sigma '\in U_{d}(k)$. Since two
Hopf algebras in the family $\mathrm{T}_{n,\,n}^{\,
\sigma}(q^{n-1},\,q)$ are isomorphic if and only if the
corresponding scalars $\sigma$ are equal, the proof is finished.
\end{proof}

Also as a consequence of \thref{clasificare} we can now compute
the automorphism groups of the Hopf algebras described in
\coref{descriere}.

\begin{theorem}\thlabel{auto}
Let $m$, $n \in \NN^{*} \setminus \{1\}$, such that $m \neq n$ and
$d = (m,\,n)$. Then, for any $q \in P_{m}(k) $, $\overline{q} \in
P_{n}(k) $ and $\sigma \in U_{d}(k)$ we have the following
isomorphisms of groups:
\begin{enumerate}
\item[1)] ${\rm Aut}_{{\rm Hopf}}\bigl(\mathrm{T}_{n,\,m}^{\, \sigma}(\overline{q},\,q)\bigl) \cong k^{*} \times k^{*} \cong {\rm Aut}_{{\rm Hopf}}\bigl(\mathrm{T}_{n,\,n}^{\, \sigma}(\overline{q},\,q)\bigl)$ where the second isomorphism holds if $q \neq \overline{q} $;
\item[2)] ${\rm Aut}_{{\rm Hopf}}\bigl(\mathrm{T}_{n,\,n}^{\, \sigma}(q,\, q)\bigl) \cong  \left\{
\begin{array}{rcl} k^{*} \times k^{*} & \mbox{if}& \sigma^{2} \neq 1\\ \bigl(k^{*} \times k^{*}\bigl) \rtimes_{\tau} \ZZ_{2} & \mbox{if}& \sigma^{2} = 1 \end{array}\right. $, where $\bigl(k^{*} \times k^{*}\bigl) \rtimes_{\tau} \ZZ_{2}$ is the semi-direct product associated to the action as automorphisms $\tau: \ZZ_{2} \to {\rm Aut} (k^{*} \times k^{*})$ given by $\tau(\overline{1})(a,\, b) = (b,\,a)$, for all $(a,\, b) \in k^{*} \times k^{*}$;
\item[3)] ${\rm Aut}_{{\rm Hopf}}\bigl(Q_{n}^{1}(q)\bigl) \cong \left\{
\begin{array}{rcl} k^{*}  & \mbox{if}& n \geqslant 3\\ k^{*} \rtimes_{\kappa} \ZZ_{2} & \mbox{if}& n = 2 \end{array}\right. $, where $k^{*} \rtimes_{\kappa} \ZZ_{2}$ is the semi-direct product associated to the action as automorphisms  $\kappa: \ZZ_{2} \to {\rm Aut} (k^{*})$ given by $\kappa(\overline{1})(a) = a^{-1}$, for all $a \in k^{*}$.
\end{enumerate}
\end{theorem}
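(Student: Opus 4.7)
The plan relies on \thref{toatemorf}: every Hopf algebra endomorphism of $\mathrm{T}_{n,m}^{\sigma}(\overline{q},q)$ or of $\mathrm{Q}_{n}^{1}(q)$ corresponds to a quadruple $(u, p, r, v)$ of unital coalgebra maps satisfying \equref{C1}--\equref{C8}, and for the Hopf algebras in parts 1) and 2) the enumeration of such quadruples is exactly the one already carried out in the proof of \thref{clasificare}. For each of the three cases I would first isolate the quadruples for which the associated map $\psi$ from \equref{morfbicros} is bijective, and then read off the induced group structure from the action on the four generators $H$, $X$, $h$, $x$.

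For part 1), and for part 2) when $\sigma^{2} \neq 1$, the only quadruple producing an automorphism is the ``diagonal'' one
\[
u(X^{j}H^{i}) = \beta^{j} X^{j} H^{i},\quad v(x^{l}h^{t}) = \eta^{l} x^{l} h^{t},\quad p = \varepsilon \cdot 1,\quad r = \varepsilon \cdot 1,
\]
so that $\psi(a \bowtie g) = u(a) \bowtie v(g)$; bijectivity forces $\beta, \eta \in k^{*}$ and composition is componentwise, yielding $k^{*} \times k^{*}$. When $\sigma^{2} = 1$ in part 2), the ``swap'' quadruple
\[
u = v = \varepsilon \cdot 1,\quad p(X^{i}H^{j}) = \zeta^{i} x^{i} h^{j},\quad r(x^{l}h^{t}) = \gamma^{l} X^{l} H^{t}
\]
also produces an automorphism (the remaining cases II and III from the proof of \thref{clasificare} are discarded since the corresponding $\psi$ is not surjective). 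On generators the swap acts by $H \leftrightarrow h$, $X \mapsto \zeta x$, $x \mapsto \gamma X$; the choice $(\zeta, \gamma) = (1, 1)$ gives an involution, the product of two swaps is a diagonal, and conjugating the diagonal $(\beta, \eta)$ by this involution exchanges its two parameters, yielding $(k^{*} \times k^{*}) \rtimes_{\tau} \ZZ_{2}$.

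For part 3) the matched pair underlying $\mathrm{Q}_{n}^{1}(q)$ is the second family from \thref{main} with $\alpha = 1$, so the classification of admissible quadruples has to be redone from scratch: the contributions $x \triangleright' X = 1 - H$ and $x \triangleleft' X = 1 - h$ now perturb the analogues of \equref{co1}--\equref{co2}. For $n \geq 3$, where $q \neq q^{n-1}$, the nine-case analysis will again collapse to the diagonal quadruple, and the decisive new constraint comes from \equref{C7} at $(g, b) = (x, X)$, which reads $\eta \beta \, (1 - H) = 1 - H$ and forces $\eta = \beta^{-1}$; the automorphism group is therefore the single parameter $k^{*}$. For $n = 2$ the swap quadruple survives as well, and \equref{C7} at $(g, b) = (x, X)$ now becomes $q \zeta \gamma = -1$, i.e.\ $\zeta \gamma = 1$; conjugating the diagonal $\beta$ by the swap involution produces $\beta^{-1}$, whence $k^{*} \rtimes_{\kappa} \ZZ_{2}$. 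The main obstacle lies precisely in this rerun of the case analysis for $\mathrm{Q}_{n}^{1}(q)$: because the inhomogeneous terms $1 - H$ and $1 - h$ now appear in every compatibility, ruling out the remaining eight $(u, p)$-families (and their $(r, v)$-counterparts) for $n \geq 3$ and confirming that exactly the diagonal and the swap survive for $n = 2$ requires a careful case-by-case redo of the computations from the proof of \thref{clasificare}.
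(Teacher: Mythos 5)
Your proposal is correct and follows essentially the same route as the paper: reuse \thref{toatemorf} together with the quadruple analysis already performed in the proof of \thref{clasificare} for parts 1) and 2) (diagonal quadruples give $k^{*}\times k^{*}$, the swap quadruple appears exactly when $\sigma^{2}=1$ and conjugation by it exchanges the two parameters), and rerun the nine-case analysis for $\mathrm{Q}_{n}^{1}(q)$ with the new actions, where \equref{C7} at $(x,X)$ forces $\eta=\beta^{-1}$. The only divergence is cosmetic: for $n=2$ in part 3) the paper delegates the remaining computation to the reference \cite{Gabi} rather than carrying out the swap-quadruple constraint as you sketch, and your reading of the part 2) condition as $\sigma^{2}=1$ (rather than the $q^{2}=1$ appearing in the paper's proof text) is the one consistent with the theorem statement.
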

\begin{proof}
The first two assertions are just easy consequences of
\thref{clasificare}. For 1), consider the bicrossed products
$\mathrm{T}_{n,\,m}^{\, \sigma}(\overline{q},\,q)$ and
respectively $\mathrm{T}_{n,\,n}^{\, \sigma}(\overline{q},\,q)$
with $q \neq \overline{q}$. Then, from the proof of the
aforementioned theorem we obtain that any automorphism of the
above Hopf algebras is of the form $\psi(a \bowtie g) = u(a)
\bowtie v(g)$ for all $a \in \mathbb{T}_{n^{2}}(\overline{q})$, $g
\in T_{m^{2}}(q)$, where the maps $u$ and $v$ are given as
follows:
\begin{eqnarray*}
&& u(X^{j} H^{i}) = \beta^{j}\, X^{j}\, H^{i},\,\, v(x^{l} h^{t})
= \eta^{l}\, x^{l}\, h^{t},\,\, {\rm with}\,\, \beta, \eta \in
k^{*}\\ && {\rm and}\,\, i,\,j \in \{0,\,1,\, \cdots,\, n-1\},\,\,
l,\,t \in \{0,\,1,\, \cdots,\, m-1\}.
\end{eqnarray*}
More precisely, the automorphism $\psi_{\beta,\, \eta}$
corresponding to a pair $(\beta,\, \eta) \in k^{*} \times k^{*}$
is given by:
$$
\psi_{\beta,\, \eta}(X) = \beta\, X,\,\, \psi_{\beta,\, \eta}(H) =
H,\,\, \psi_{\beta,\, \eta}(x) = \eta\, x,\,\, \psi_{\beta,\,
\eta}(h) = h.
$$
It is now straightforward to see that the automorphism group of
these Hopf algebras is just $k^{*} \times k^{*}$ with
componentwise multiplication.

2) Consider now the Hopf algebra $\mathrm{T}_{n,\,n}^{\,
\sigma}(q,\, q)$. Using again the proof of \thref{clasificare} we
obtain that the automorphisms of $\mathrm{T}_{n,\,n}^{\,
\sigma}(q,\, q)$ are of the form $\psi_{\beta,\, \eta}$ described
above with $(\beta,\, \eta) \in (k^{*} \times k^{*})$ and thus
${\rm Aut}_{{\rm Hopf}}\bigl(\mathrm{T}_{n,\,n}^{\, \sigma}(q,\,
q)\bigl) \cong k^{*} \times k^{*}$.

However, if $q^{2} = 1$, we have in addition the following family
of automorphisms denoted by $\varphi_{\zeta,\, \gamma}$, with
$(\zeta,\, \gamma) \in (k^{*} \times k^{*})$, and defined as
follows:
$$
\varphi_{\zeta,\, \gamma}(X) = \zeta\, x, \,\, \varphi_{\zeta,\,
\gamma}(H) = h,\,\, \varphi_{\zeta,\, \gamma}(x) = \gamma\, X,\,\,
\varphi_{\zeta,\, \gamma}(h) = H.
$$
Moreover, it can be easily seen that the
multiplication rules for the two families of automorphisms are as
follows:
\begin{eqnarray*}
&& \psi_{\beta,\, \eta} \circ \psi_{\beta ',\, \eta '} =
\psi_{\beta \beta',\, \eta \eta'},\,\,\, \varphi_{\zeta,\, \gamma}
\circ \varphi_{\zeta ',\, \gamma '} = \psi_{\zeta ' \gamma,\,
\zeta
\gamma '}\\
&& \varphi_{\zeta,\, \gamma} \circ \psi_{\beta,\, \eta} =
\varphi_{\beta \zeta,\, \eta \gamma},\,\,\, \psi_{\beta,\, \eta}
\circ \varphi_{\zeta,\, \gamma} = \varphi_{\zeta \eta,\, \gamma
\beta}.
\end{eqnarray*}
It is now straightforward to check that the map defined below is a
group isomorphism:
\begin{eqnarray*}
\Upsilon: {\rm Aut}_{{\rm Hopf}}\bigl(\mathrm{T}_{n,\,n}^{\,
\sigma}(q,\, q)\bigl) \to \bigl(k^{*} \times k^{*}\bigl)
\rtimes_{\tau} \ZZ_{2},\,\, \Upsilon(\psi_{\beta,\, \eta}) =
\bigl((\beta,\, \eta),\,\overline{0}\bigl),\,\,
\Upsilon(\varphi_{\zeta,\, \gamma}) = \bigl((\gamma,\, \zeta),\,
\overline{1}\bigl).
\end{eqnarray*}

3) Let $Q_{n}^{1}(q) =  \mathbb{T}_{n^{2}}(q^{n-1}) \bowtie T_{n^{2}}(q)$ be the bicrossed product corresponding to the matched pair in \thref{main}, 2), for $\alpha = 1$, namely:
\begin{eqnarray}
&& h \triangleleft H = h,\,\,\, h \triangleleft X = 0,\,\,\, x \triangleleft H = qx,\,\,\, x \triangleleft X = 1-h \eqlabel{mp11}\\
&& h \triangleright H = H, \,\,\, h \triangleright X = q X,\,\,\, x \triangleright H = 0,\,\,\, x \triangleright X = 1-H \eqlabel{mp22}.
\end{eqnarray}
As before, our approach relies on \thref{toatemorf}: any Hopf algebra map $\Gamma : Q_{n}^{1}(q)  \to Q_{n}^{1}(q)$ is parameterized by a quadruple $(u, p, r, v)$ consisting of unital coalgebra maps $u: \mathbb{T}_{n^{2}}(q^{n-1}) \to \mathbb{T}_{n^{2}}(q^{n-1}) $, $p: \mathbb{T}_{n^{2}}(q^{n-1}) \to T_{n^{2}}(q)$, $r: T_{n^{2}}(q) \to \mathbb{T}_{n^{2}}(q^{n-1}) $, $v: T_{n^{2}}(q) \to T_{n^{2}}(q)$ satisfying the compatibility conditions \equref{C1}-\equref{C8}. Exactly as in the proof of \thref{clasificare} we obtain, based on the compatibility condition \equref{C1} applied for $a = X$, the same nine possibilities for the pair of maps $(u,\,p)$. Moreover, again as in the proof of the aforementioned theorem, the compatibility conditions \equref{C3} and \equref{C4} give:
\begin{eqnarray}
u(X) H^{a} + p(X) \triangleright  H^{a} = q^{n-1} H^{a} \bigl(h^{b} \triangleright  u(X)\bigl)\eqlabel{c1}\\
\bigl(p(X)  \triangleleft  H^{a} \bigl) h^{b} = q^{n-1} \bigl(h^{b} \triangleleft  u(X) \bigl) h^{b} + q^{n-1} h^{b} p(X)\eqlabel{c2}
\end{eqnarray}
where this time $\triangleright$ and respectively $ \triangleleft $ are those given in \equref{mp11} and \equref{mp22}.

Suppose the maps $u$, $p$ are given as in I., i.e.:
\begin{eqnarray*}
u(H) = H,\,\, u(X) = \alpha (1 - H),\,\, p(H) = h,\,\, p(X) =  \alpha (1 - h),\,\, \alpha \in k.
\end{eqnarray*}
Then the compatibility condition \equref{c1} gives:
\begin{eqnarray*}
\alpha (1 - H) H + \alpha (1 - h) \triangleright H = q^{n-1} H \bigl(h \triangleright \alpha (1 - H)\bigl)
\end{eqnarray*}
which comes down to $\alpha = \alpha q^{n-1}$ and since $q^{n-1} \neq 1$ we get $\alpha = 0$. As we have seen before, this implies $\Gamma(X \bowtie 1) = 0$ so in this case $\Gamma$ is not an isomorphism. Using the same strategy as above, a rather long but straightforward computation based on the compatibility conditions \equref{c1} and \equref{c2} rules out the other cases as well except for the second one and the fourth one if $n = 2$.
Indeed, suppose the maps $u$, $p$ are given as in II., i.e.:
$$
u(H) = H,\,\, u(X) = \alpha (1 - H) + \beta X,\,\, p(H) = 1,\,\,
p(X) =  0,\,\, \alpha, \beta \in k.
$$
Then the compatibility condition \equref{c2} is trivially fulfilled while \equref{c1} gives:
$$
\bigl(\alpha (1-H)+ \beta X\bigl) H = q^{n-1} H\bigl(\alpha (1-H)+ \beta X\bigl)
$$
which comes down to $\alpha = \alpha q^{n-1}$ and since $q^{n-1}
\neq 1$ we get $\alpha = 0$. Thus $u(X) = \beta X$ and $p(H) = 1$.
Moreover, \equref{C3} and \equref{C4} yield the following general
formulas for $u$ and $p$:
\begin{eqnarray*}
&& u(X^{j} H^{i}) = \beta^{j}\, X^{j}\, H^{i},\,\, {\rm where}\,\, \beta \in k^{*},\,\, i,\,j \in \{0,\,1,\, \cdots,\, n-1\}\\
&& p(Y) = \epsilon(Y) 1,\,\, {\rm for} \,\, {\rm all} \,\, Y \in \mathbb{T}_{n^{2}}(q^{n-1}).
\end{eqnarray*}
Finally, suppose the maps $u$, $p$ are given as in IV., i.e.:
$$
u(H) = 1,\,\, u(X) = 0,\,\, p(H) = h,\,\, p(X) =  \alpha (1 - h) + \beta x,\,\, \alpha, \beta \in k.
$$
Then, \equref{c1} is trivially fulfilled while \equref{c2} gives:
$$
\bigl(\alpha (1-h)+ \beta x\bigl) h = q^{n-1} h\bigl(\alpha (1-h)+ \beta x\bigl)
$$
which comes down to $\alpha = \alpha q^{n-1}$ and $q \beta =
q^{n-1} \beta$.  Therefore, we must have $\alpha = 0$ and if $n
\geqslant 3$ we also have $\beta = 0$ which we know it implies
that $\Gamma$ is not an isomorphism. However, for $n = 2$ the
second equality is trivially fulfilled and we obtain $p(X) = \beta
x$ for some $\beta \in k^{*}$. Then, using again \equref{C3} and
\equref{C4} we get:
\begin{eqnarray*}
&&p(X^{i}H^{j}) = \beta^{i} x^{i}h^{j},\,\,{\rm where}\,\, \beta \in k^{*},\,\, i,\,j \in \{0,\,1,\, \cdots,\, n-1\},\,\, \\
&&u(Y) = \epsilon(Y)1,\,\, {\rm for\,\, all}\,\,Y \in \mathbb{T}_{n^{2}}(q^{n-1}).
\end{eqnarray*}

We employ the same strategy in order to determine the pair of maps
$(r, \,v)$ but relying this time on the compatibility conditions
\equref{C2}, \equref{C5} and respectively \equref{C6}. Then for
any $n \geqslant 3$ we obtain the following pair of maps:
\begin{eqnarray*}
&& v(x^{j} h^{i}) = \eta^{j}\, x^{j}\, h^{i},\,\, {\rm where}\,\, \eta \in k^{*},\,\, i,\,j \in \{0,\,1,\, \cdots,\, n-1\}\\
&& r(y) = \epsilon(y) 1,\,\, {\rm for} \,\, {\rm all} \,\, y \in T_{n^{2}}(q)
\end{eqnarray*}
and in the case when $n=2$ we have, in addition, the following:
\begin{eqnarray*}
&& v(y) = \epsilon(y) 1,\,\, {\rm for} \,\, {\rm all} \,\, y \in T_{n^{2}}(q)\\
&& r(x^{j} h^{i}) = \eta^{j}\, X^{j}\, H^{i},\,\, {\rm where}\,\, \eta \in k^{*},\,\, i,\,j \in \{0,\,1,\, \cdots,\, n-1\}.
\end{eqnarray*}
Since, as mentioned before, $Q_{2}^{1}(-1)$ coincides with the
Hopf algebra $\mathcal{H}_{16, 1}$ whose automorphism group was
described in detail in \cite{Gabi} we will not provide the
complete computations for the case $n=2$. We only included the
details above in order to emphasize the difference between the two
cases, namely $n=2$ and respectively $n \geqslant 3$, and to
explain why the two automorphism groups are different. For the
remaining of the proof we will focus on the case $n \geqslant 3$.
Considering the above discussion, we are left to check the
compatibility conditions \equref{C7} and \equref{C8} for the
quadruple $(u, \,p,\, r,\, v)$ given as follows:
\begin{eqnarray*}
&& u(X^{j} H^{i}) = \beta^{j}\, X^{j}\, H^{i},\,\, v(x^{j} h^{i}) = \eta^{j}\, x^{j}\, h^{i},{\rm where}\,\, \beta, \eta \in k^{*},\,\, i,\,j \in \{0,\,1,\, \cdots,\, n-1\}\\
&& p(Y) = \epsilon(Y) 1,\,\, r(y) = \epsilon(y) 1,\,\, {\rm for} \,\, {\rm all} \,\, Y \in \mathbb{T}_{n^{2}}(q^{n-1}),\,\, y \in T_{n^{2}}(q).\\
\end{eqnarray*}
As $r$ and $p$ are trivial maps, the two aforementioned compatibilities reduce to the following:
\begin{eqnarray*}
v(g) \triangleright u(b) = u(g \triangleright b),\,\,\, v(g)
\triangleleft u(b) = v(g \triangleleft b).
\end{eqnarray*}
Now it is straightforward to see that both compatibilities are trivially fulfilled for $(g,\,b) \in \{(h,\, H),\, (x,\, H),\, (h,\,X)\}$. Finally, if $(g,\,b) = (x,\, X)$ we obtain $\eta \beta (1-H) = 1-H$ and respectively $\eta \beta (1-h) = 1-h$ and thus $\eta = \beta^{-1}$. Moreover, the automorphism induced by the quadruple $(u, \,p,\, r,\, v)$ depicted above, which we will denote by $\varphi_{\beta}$ for any $\beta \in k^{*}$, is given as follows:
$$\varphi_{\beta}(H) = H,\,\,\, \varphi_{\beta}(X) = \beta\,X,\,\,\, \varphi_{\beta}(h) = h,\,\,\, \varphi_{\beta}(x) = \beta^{-1}\,x.$$
We can now conclude that ${\rm Aut}_{{\rm Hopf}}\bigl(Q_{n}^{1}(q)\bigl) \cong k^{*}$ with the obvious isomorphism which sends any automorphism $\varphi_{\beta}$ to $\beta \in k^{*}$. Together with \cite[Corollary 2.5]{Gabi} we obtain the desired conclusion.
\end{proof}

\end{document}